\documentclass[11pt]{article}

\usepackage{amsmath}
\usepackage{amssymb}
\usepackage{amsthm}
\usepackage{mathtools}
\usepackage{tabu}
\usepackage{textcomp}
\usepackage{listings}
\usepackage{graphicx}
\usepackage{tikz}
\usepackage{arydshln}
\usepackage[margin=1in]{geometry}
\usepackage{indentfirst}
\usepackage{array}
\usepackage{xltabular}
\usepackage{caption}
\usepackage{chessfss}

\newtheorem{proposition}{Proposition}[section]

\newtheorem{definition}{Definition}[section]

\setboardfontsize{15pt}
\newcommand{\wking}{\WhiteKingOnWhite}
\newcommand{\bking}{\BlackKingOnWhite}

\newcommand{\varT}{\mathcal{T}}
\newcommand{\varU}{\mathcal{U}}

\newcommand{\tikzpic}[2]{
    \begin{tikzpicture}[scale=#1]
        #2
    \end{tikzpicture}}

\newcommand{\ctikzpic}[2]{
    \begin{center}
        \begin{tikzpicture}[scale=#1]
            #2
        \end{tikzpicture}
    \end{center}}

\newcommand{\drawtile}[2]{
    \draw (#1,#2) rectangle (#1+1,#2+1)}

\newcommand{\drawOiv}[2]{
    \filldraw[fill=white] (#1-1,#2+1)--(#1-1,#2-1)--(#1+1,#2-1)--(#1+1,#2+1)--(#1-1,#2+1);
    \draw[dashed] (#1-1,#2)--(#1+1,#2); \draw[dashed] (#1,#2-1)--(#1,#2+1)
}

\newcommand{\Ii}{
    \tikzpic{0.25}{
        \drawtile{0}{0};
    }
}

\newcommand{\Oiv}{\!\!\!\!\raisebox{-0.25em}{
    \tikzpic{0.25}{
        \drawOiv{0}{0};
    }}
}

\newcommand{\drawwtile}[6]{
    \draw (#1,#2) rectangle (#1+1,#2+1);
    \node at(#1+0.25,#2+0.5) [left] {#3};
    \node at(#1+0.5,#2+0.75) [above] {\phantom{$d$} #4 \phantom{$p$}};
    \node at(#1+0.5,#2+0.25) [below] {\phantom{$d$} #5 \phantom{$p$}};
    \node at(#1+0.75,#2+0.5) [right] {#6}
}

\newcommand{\wtile}[4]{\hspace{-0.9em}
    \raisebox{-0.3cm-1em}{
        \tikzpic{0.6}{
            \drawwtile{0}{0}{$#1$}{$#2$}{$#3$}{$#4$};
        }
    }\hspace{-0.6em}
}

\newcommand{\wtilek}[4]{\hspace{-0.9em}
    \raisebox{-0.3cm-1em}{
        \tikzpic{0.6}{
            \drawwtile{0}{0}{$#1$}{$#2$}{$#3$}{$#4$};
            \node at(0.5,0.5) {\wking};
        }
    }\hspace{-0.6em}
}

\newcommand{\wtilebk}[4]{\hspace{-0.9em}
    \raisebox{-0.3cm-1em}{
        \tikzpic{0.6}{
            \drawwtile{0}{0}{$#1$}{$#2$}{$#3$}{$#4$};
            \node at(0.5,0.5) {\bking};
        }
    }\hspace{-0.6em}
}

\newcommand{\wIii}[6]{\hspace{-0.9em}
    \raisebox{-0.3cm-1em}{
        \tikzpic{0.6}{
            \drawwtile{0}{0}{$#1$}{$#2$}{$#4$}{};
            \drawwtile{1}{0}{}{$#3$}{$#5$}{$#6$};
        }
    }\hspace{-0.6em}
}

\newcommand{\wIiiki}[6]{\hspace{-0.9em}
    \raisebox{-0.3cm-1em}{
        \tikzpic{0.6}{
            \drawwtile{0}{0}{$#1$}{$#2$}{$#4$}{};
            \drawwtile{1}{0}{}{$#3$}{$#5$}{$#6$};
            \node at(0.5,0.5){\wking};
        }
    }\hspace{-0.6em}
}

\newcommand{\wIiikii}[6]{\hspace{-0.9em}
    \raisebox{-0.3cm-1em}{
        \tikzpic{0.6}{
            \drawwtile{0}{0}{$#1$}{$#2$}{$#4$}{};
            \drawwtile{1}{0}{}{$#3$}{$#5$}{$#6$};
            \node at(1.5,0.5){\wking};
        }
    }\hspace{-0.6em}
}

\newcommand{\wIiii}[8]{\hspace{-0.9em}
    \raisebox{-0.1cm-1em}{
        \tikzpic{0.2}{
            \drawwtile{0}{0}{$#1$}{$#2$}{$#5$}{};
            \drawwtile{1}{0}{}{$#3$}{$#6$}{};
            \drawwtile{2}{0}{}{$#4$}{$#7$}{$#8$};
        }
    }\hspace{-0.6em}
}

\title{Independent Set Enumeration in King Graphs by Tensor Network Contractions}
\author{Kai Liang}
\date{\today} 

\begin{document}
    \maketitle

    \begin{abstract}
        This paper discusses the enumeration of independent sets in king graphs of size $m \times n$, based on the tensor network contractions algorithm given in reference~\cite{tilEnum}.
        We transform the problem into Wang tiling enumeration within an $(m+1) \times (n+1)$ rectangle and compute the results for all cases where $m + n \leq 79$ using tensor network contraction algorithm,
        and provided an approximation for larger $m, n$.

        Using the same algorithm, we also enumerated independent sets with vertex number restrictions.
        Based on the results, we analyzed the vertex number that maximize the enumeration for each pair $(m, n)$.
        Additionally, we compute the corresponding weighted enumeration, where each independent set is weighted by the number of its vertices (i.e., the total sum of vertices over all independent sets).
        The approximations for larger $m, n$ are given as well.

        Our results have added thousands of new items to the OEIS sequences A089980 and A193580.
        In addition, the combinatorial problems above are closely related to the hard-core model in physics.
        We estimate some important constants based on the existing results, and our estimation of the entropy constant gives more five exact digits to the existing results of OEIS A247413.
        \end{abstract}

    \small
    \begin{center}
        \textbf{Note}
    \end{center}
    ~~~~This is a reversion of the original paper (which can also be found on this website).
    The changes mainly include:

    (1) Changed the data of $c_{m,n}$, the vertex number that maximum the enumeration.
    These data were found to be incorrect for unknown reasons.

    (2) Deleted the subsequent discussion of using the estimation formula to predict $c_{m,n}$ .
    Its accuracy was found to be an illusion caused by the estimation method of related constants.

    (3) Applied more careful numerical analysis methods to the estimation of the constants, and more accurate estimation is given.

    (4) Changed the term ``entropy constant'' to ``average free energy''.
    The definition in this paper is consistent with the latter, while the former is generally defined as the exponential of the latter.

    (5) Corrected some typos (like ``convex'' to ``vertex'').
    \normalsize

    \section{Preliminary}

    We begin by introducing the definitions of king graphs independent sets in graphs.

    \begin{definition}
        (King graph) An $m \times n$ \textit{king graph} $K_{m\times n}$ is a graph whose vertex set corresponds to an $m \times n$ grid, but edges additionally include diagonal adjacencies, modeling the movement of a chess king on a chessboard.
    \end{definition}

    \begin{definition}
        (Independent Set) Given a graph $G = (V, E)$, an independent set is a subset $S \subseteq V$ such that no two vertices in $S$ are adjacent, i.e., $\forall v_1, v_2 \in S$, $\{v_1, v_2\} \notin E$.
    \end{definition}

    Thus, an independent set of the $m\times n$ king graph with $c$ vertices can be seen as placing $c$ kings on a $m\times n$ chessboard so that they cannot attack each other.
    We denote the number of independent sets (including the empty set) on an $m \times n$ king graph as $N_{m\times n}$.

    It can be observed that $N_{m\times n}$ is equal to the number of ways to arrange non-overlapping $2 \times 2$ square tiles $\Oiv$ within an $(m+1) \times (n+1)$ rectangular grid~\cite{Enum, Enum2}.
    This correspondence arises as follows:
    Consider the $(m+1) \times (n+1)$ grid, treating each position not in the leftmost column or top row as a vertex,
    each \textit{bottom-right corner} of a $\Oiv$ as a king.
    A valid tiling (with no overlapping tiles) ensures that the bottom-right corners of all tiles form an independent set in the king graph, as adjacency (including diagonal) would imply overlapping tiles.

    The following figures illustrates a $4 \times 5$ king graph, and it's corresponding $\Oiv$ arrangement.
    \begin{center}
        \tikzpic{0.6}{
            \draw (0,0) grid (3,4);
            \draw (0,3)--(1,4); \draw (0,2)--(2,4); \draw (0,1)--(3,4); \draw (0,0)--(3,3); \draw (1,0)--(3,2); \draw (2,0)--(3,1);
            \draw (3,3)--(2,4); \draw (3,2)--(1,4); \draw (3,1)--(0,4); \draw (3,0)--(0,3); \draw (2,0)--(0,2); \draw (1,0)--(0,1);
            \node at(0,0) {\wking};\filldraw[fill=white] (0,1) circle (0.1);\filldraw[fill=white] (0,2) circle (0.1);\node at(0,3) {\wking};\filldraw[fill=white] (0,4) circle (0.1);
            \filldraw[fill=white] (1,0) circle (0.1);\filldraw[fill=white] (1,1) circle (0.1);\filldraw[fill=white] (1,2) circle (0.1);\filldraw[fill=white] (1,3) circle (0.1);\filldraw[fill=white] (1,4) circle (0.1);
            \filldraw[fill=white] (2,0) circle (0.1);\filldraw[fill=white] (2,1) circle (0.1);\node at(2,2) {\wking};\filldraw[fill=white] (2,3) circle (0.1);\filldraw[fill=white] (2,4) circle (0.1);
            \node at(3,0) {\wking};\filldraw[fill=white] (3,1) circle (0.1);\filldraw[fill=white] (3,2) circle (0.1);\filldraw[fill=white] (3,3) circle (0.1);\node at(3,4) {\wking};
        }
        \raisebox{1.5cm}{$\Rightarrow~~$}
        \tikzpic{0.6}{
            \filldraw[fill=gray!20] (0-1.5,0-0.5) rectangle (3+0.5, 4+1.5);

            \drawOiv{0-0.5}{0+0.5};\drawOiv{3-0.5}{0+0.5};\drawOiv{2-0.5}{2+0.5};\drawOiv{0-0.5}{3+0.5};\drawOiv{3-0.5}{4+0.5};

            \draw (0,0) grid (3,4);
            \draw (0,3)--(1,4); \draw (0,2)--(2,4); \draw (0,1)--(3,4); \draw (0,0)--(3,3); \draw (1,0)--(3,2); \draw (2,0)--(3,1);
            \draw (3,3)--(2,4); \draw (3,2)--(1,4); \draw (3,1)--(0,4); \draw (3,0)--(0,3); \draw (2,0)--(0,2); \draw (1,0)--(0,1);

            \node at(0,0) {\wking};\filldraw[fill=white] (0,1) circle (0.1);\filldraw[fill=white] (0,2) circle (0.1);\node at(0,3) {\wking};\filldraw[fill=white] (0,4) circle (0.1);
            \filldraw[fill=white] (1,0) circle (0.1);\filldraw[fill=white] (1,1) circle (0.1);\filldraw[fill=white] (1,2) circle (0.1);\filldraw[fill=white] (1,3) circle (0.1);\filldraw[fill=white] (1,4) circle (0.1);
            \filldraw[fill=white] (2,0) circle (0.1);\filldraw[fill=white] (2,1) circle (0.1);\node at(2,2) {\wking};\filldraw[fill=white] (2,3) circle (0.1);\filldraw[fill=white] (2,4) circle (0.1);
            \node at(3,0) {\wking};\filldraw[fill=white] (3,1) circle (0.1);\filldraw[fill=white] (3,2) circle (0.1);\filldraw[fill=white] (3,3) circle (0.1);\node at(3,4) {\wking};
        }
    \end{center}
    
    \section{Independent set enumeration}

    \subsection{Algorithms}

    Following the method in~\cite{tilEnum}, we can transform the enumeration of such tile arrangement into an equivalence Wang tiling enumeration within rectangular regions.

    Clearly, the arrangement of such tiles is equivalent to the tiling of $\Ii+\Oiv$ covering the entire chessboard, by using the unit tile $\Ii$ to fill all positions without $\Oiv$.
    The corresponding Wang tiles are (the following tile edges without characters are treated as having the boundary character $\sharp$):

    \[
        \varT = \wtilek{1}{1}{}{}+\wtile{1}{}{1}{}+\wtile{}{}{}{1}+\wtile{}{}{}{},
    \]

    Here, $\wtile{1}{1}{}{}, \wtile{1}{}{1}{}$ and $\wtile{}{}{}{1}$ can \textit{uniquely} combine to form a $\Oiv$ (requiring only $\sharp$ is on the boundary) as shown below:

    \ctikzpic{0.6}{
        \drawOiv{0}{0}; \node at(-0.2, -0.5) {$1$}; \node at(-0.2, 0.5) {$1$}; \node at(0.3, 0) {$1$};
        \node at(0.5,-0.5) {$\wking$};
    }
    while each $\wtile{}{}{}{}$ (e.g. $\wtile{\sharp}{\sharp}{\sharp}{\sharp}$) gives a $\Ii$.
    The horizontal and vertical characters are $\varTheta=\varSigma=\{\sharp, 1\}$, with sizes $\theta=\sigma=2$.

    Therefore, we can apply the algorithm proposed in ~\cite{tilEnum} to compute the enumeration.
    Here, the maximum size of the state tensor (which can be reached at most of the positions) is given by
    \[
        \theta\sigma_0\sigma^{m-1}=2^{m},
    \]  
    where $m$ is the width of the tiling region, $\sigma_0=1$ is the size of vertical alphabet of $\varT_0=\{\sharp\}$,
    obtained by constraining the left-edge character in $\varT$ to be the boundary character $\sharp$, which is the tile set for the leftmost column.

    To reduce memory usage (and thus extend computable widths), we \textit{merge Wang tiles horizontally}. 
    Specifically, Let $\varT^{(l)}$ denote the merged tile set obtained by \textit{horizontally combining} $l$ \textit{tiles} from the original set $\varT$, which also required that the characters on the spliced edges are the same.
    For example,
    \begin{align*}
        \varT^{(1)} &= \varT = \wtilek{1}{1}{}{}+\wtile{1}{}{1}{}+\wtile{}{}{}{1}+\wtile{}{}{}{}; \\
        \varT^{(2)} &= \wIii{}{}{}{}{}{}+\wIii{}{}{}{}{}{1}+\wIiikii{}{}{1}{}{}{}+\wIii{}{}{}{}{1}{}+\wIiiki{1}{1}{}{}{}{}+\wIii{1}{}{}{1}{}{}; \\
        \varT^{(3)} &= \wIiii{}{}{}{}{}{}{}{}+\wIiii{}{}{}{}{}{}{}{1}+\wIiii{}{}{}{1}{}{}{}{}+\wIiii{}{}{}{}{}{}{1}{}
        +\wIiii{}{}{1}{}{}{}{}{}+\wIiii{}{}{1}{}{}{}{}{1}+\wIiii{}{}{}{}{}{1}{}{}+\wIiii{}{}{}{}{}{1}{}{1}\\
                   +&\wIiii{1}{1}{}{}{}{}{}{}+\wIiii{1}{1}{}{}{}{}{}{1}+\wIiii{1}{}{}{}{1}{}{}{}+\wIiii{1}{}{}{}{1}{}{}{1}
        +\wIiii{1}{1}{}{1}{}{}{}{}+\wIiii{1}{}{}{}{1}{}{1}{}+\wIiii{1}{1}{}{}{}{}{1}{}+\wIiii{1}{}{}{1}{1}{}{}{}; \\
        \varT^{(4)} &= \ldots.
    \end{align*}

    We denote the vertical alphabet for $\varT^{(l)}$ as $\varTheta^{(l)}$, and its size as $\theta^{(l)}$.
    When $l\geq 2$, $\theta^{(l)}$ is \textit{smaller} than $\theta^l$.
    For example,
    \[
        \begin{cases}
            \varTheta^{(2)}=\{\sharp\sharp,\sharp1,1\sharp\},
            & \theta^{(2)}=3<4; \\
            \varTheta^{(3)}=\{\sharp\sharp\sharp,\sharp\sharp1,\sharp1\sharp,1\sharp\sharp,1\sharp1\},
            & \theta^{(3)}=5<8; \\
            \varTheta^{(4)}=\{\sharp\sharp\sharp\sharp,\sharp\sharp\sharp1,\sharp\sharp1\sharp,\sharp1\sharp\sharp,1\sharp\sharp\sharp,
            \sharp1\sharp1,1\sharp\sharp1,1\sharp1\sharp\},
            & \theta^{(4)}=8<16.
        \end{cases}
    \]

    This is because certain horizontal character sequences (e.g., the sequences include $11$) lead to invalid tilings and thus do not appear,
    In fact, it's easy to prove that this makes the $\theta^{(l)}$ here the $(l+2)$-th Fibonacci number.
    This reduction in vertical alphabet size enables us to reduce the size of the state tensor through such tile merging.
    However, this will simultaneously increase the size of the transfer tensor, potentially reducing the algorithm's efficiency.
    Therefore, the number of merged tiles (here, 4) should also not be too large.

    When constraining the left-edge characters to be the boundary character $\sharp$, the Wang tile sets above gives
    \begin{align*}
        \varT^{(1)}_0 &= \wtile{}{}{}{1}+\wtile{}{}{}{}; \\
        \varT^{(2)}_0 &= \wIii{}{}{}{}{}{}+\wIii{}{}{}{}{}{1}+\wIiikii{}{}{1}{}{}{}+\wIii{}{}{}{}{1}{}; \\
        \varT^{(3)}_0 &= \wIiii{}{}{}{}{}{}{}{}+\wIiii{}{}{}{}{}{}{}{1}+\wIiii{}{}{}{1}{}{}{}{}+\wIiii{}{}{}{}{}{}{1}{}
        +\wIiii{}{}{1}{}{}{}{}{}+\wIiii{}{}{1}{}{}{}{}{1}+\wIiii{}{}{}{}{}{1}{}{}+\wIiii{}{}{}{}{}{1}{}{1};\\
        \varT^{(4)}_0 &= \ldots.
    \end{align*}

    Let $m = 4m_1 + m_0$, where $1 \leq m_0 \leq 4$.
    Instead of processing $m$ tiles individually, we replace each row of $m$ $\varT$ with
    a merged tile set $\varT_0^{(m_0)}$ and $m_1$ merged tile sets $\varT^{(4)}$.
    Now the size of the state tensor is
    \[ 
        \begin{cases}
            2 \times 5 \times (\theta^{(4)})^{m_1-1}=10\times8^{m_1-1},& m=4m_1; \\
            2 \times 1 \times (\theta^{(4)})^{m_1}  =2 \times 8^{m_1}, & m=4m_1+1; \\
            2 \times 2 \times (\theta^{(4)})^{m_1}  =4 \times 8^{m_1}, & m=4m_1+2; \\
            2 \times 3 \times (\theta^{(4)})^{m_1}  =6 \times 8^{m_1}, & m=4m_1+3.
        \end{cases}
    \]

    For the case of $m=39$, the original size of the state tensor is $2^{39}\approx5.50\times10^{11}$.
    After such tile merging, the size can be greatly reduced to $6\times 8^{9}\approx8.05\times10^{8}$, about $0.15\%$ of the original.
    This allows us to calculate more results under the same memory limit.
    
    \subsection{Results}
    
    We computed all cases where $m \leq 39$ and $m + n \leq 81$.
    By utilizing transpose symmetry, we got the results of $N_{m\times n}$ for all $m+n\leq 79$, which have been recorded in the OEIS sequence A245013.
    The table below shows the results of $N_{m\times n}$ in the widest cases, $m = 39, n\leq42$:
    \tiny
    \begin{align*}
        n~~ & N_{39\times n}\\
        1~~ & 165580141\\
        2~~ & 733007751851\\
        3~~ & 502179158360159299\\
        4~~ & 22771944702872450167161\\
        5~~ & 4140603472432724183285215455\\
        6~~ & 361264217123294193531786810807269\\
        7~~ & 46171388216658592659967120655934594283\\
        8~~ & 4829005428999699203482047276834319460871985\\
        9~~ & 560823794943799981958596276541103531065285978787\\
        10~~ & 61653632830449131139551142952360173537910722679009015\\
        \ldots & \ldots\\
        \shortstack[l]{37\\\phantom{0}}~~ &
        \shortstack[l]{1331464027881280234024031532809705979471117841830664993119506315222058844040398017114991077499161882\\719134524891948563581295826734084626060455864192492513080227177353211012535812522762875881}\\
        \shortstack[l]{38\\\phantom{0}}~~ &
        \shortstack[l]{1491579151481223081575668823347931866528869620731772167833793863638288790141439807559317683861174272\\66540898642136794886735068023332063753540430618542683985290363866576627704123956874510769675783}\\
        \shortstack[l]{39\\\phantom{0}}~~ &
        \shortstack[l]{1670948909947452458237211642998627476178642949226304933158196184708941334570072518008961410076515171\\0544643590249894397326545482379274046055465455795978525196665171332045124603707542958232439062741757}\\
        \shortstack[l]{40\\\phantom{0}\\\phantom{0}}~~ &
        \shortstack[l]{1871888767214833519778416569659361340742641936841298481720379826733827750270553548107482491956907465\\5474839871780014244550497037156737408868058735630687755030675037063429735777160848520237745717661442\\57325}\\
        \shortstack[l]{41\\\phantom{0}\\\phantom{0}}~~ &
        \shortstack[l]{2096992634754029797182005348934572385984645467541110974296241944237840648768502058893972758349436005\\5500934087992366486581710388641344268328582197341083490099652633556625253693536995761571600008977796\\9484415685}\\
        \shortstack[l]{42\\\phantom{0}\\\phantom{0}}~~ &
        \shortstack[l]{2349166353779710880226003473508935706680803713992984214900651174502649190135094980604793655680345991\\1651504352540487291021069136729539404762109765347692971125898749468839489811972274400159593061719075\\979938123357909}
    \end{align*}
    \normalsize
    As can be observed, when the width $m$ is fixed, as the height $n$ increases, the enumeration trend approximates exponential growth.

    In fact, independent sets on the king graph can be viewed as a specific \textit{hard-core model}~\cite{lim2,entropy}.
    We can regard each king as a particle, and the limit of no attack between Kings is that particles do not overlap.
    Similar with the other hard-core models~\cite{const, const2, lim, entropy2}, there exists a constant $\kappa$ such that:
    \begin{align*}
        \kappa_{m\times n}&\coloneq\frac{\ln N_{m\times n}}{mn}; \\
        \kappa &\coloneq \lim_{m,n\rightarrow\infty}\kappa_{m\times n},
    \end{align*}
    which is called the \textit{average free energy} of the model.
    Here $\kappa$ is the average entropy contributed by each vertex, so the total entropy of the system (i.e. the natural logarithm of the number of states) is $mn\kappa$.
    In addition, $\kappa$ is the entropy of the infinite king graph.

    The line charts below illustrate the values of $\kappa_{m\times n}$ derived from our results, where $m=n, 2n,3n$.
    The horizontal axis are $n$ and $\frac{m+n}{mn}$, respectively.
    \begin{center}
        \tikzpic{1}{
            \draw[->](0,0)--(5,0);\node at(5,0)[right]{$n$};
            \draw[step=1.25] (0,-0.1)grid(4.5,0);\node at(0,-0.1)[below]{0};\node at(1.25,-0.1)[below]{10};\node at(2.5,-0.1)[below]{20};\node at(3.75,-0.1)[below]{30};
            \draw[->](0,0)--(0,5.5);\node at(0,5.5) [above] {$\kappa_{m\times n}$};
            \draw (-0.1,0)grid(0,5.5);\node at(-0.1,0)[left]{0.28};\node at(-0.1,1)[left]{0.33};\node at(-0.1,2)[left]{0.38};\node at(-0.1,3)[left]{0.43};\node at(-0.1,4)[left]{0.48};\node at(-0.1,5)[left]{0.53};
            \draw (0.250,2.447)--(0.375,2.300)--(0.500,1.586)--(0.625,1.414)--(0.750,1.189)--(0.875,1.077)--(1.000,0.971)--(1.125,0.898)--(1.250,0.836)--(1.375,0.787)--(1.500,0.745)--(1.625,0.711)--(1.750,0.681)--(1.875,0.655)--(2.000,0.632)--(2.125,0.612)--(2.250,0.594)--(2.375,0.578)--(2.500,0.564)--(2.625,0.551)--(2.750,0.539)--(2.875,0.528)--(3.000,0.519)--(3.125,0.509)--(3.250,0.501)--(3.375,0.493)--(3.500,0.486)--(3.625,0.480)--(3.750,0.473)--(3.875,0.467)--(4.000,0.462)--(4.125,0.457)--(4.250,0.452)--(4.375,0.447)--(4.500,0.443)--(4.625,0.439)--(4.750,0.435)--(4.875,0.431);
            \draw[densely dashed] (0.125,5.386)--(0.250,2.011)--(0.375,1.751)--(0.500,1.275)--(0.625,1.125)--(0.750,0.966)--(0.875,0.878)--(1.000,0.801)--(1.125,0.746)--(1.250,0.700)--(1.375,0.663)--(1.500,0.632)--(1.625,0.606)--(1.750,0.583)--(1.875,0.564)--(2.000,0.547)--(2.125,0.532)--(2.250,0.519)--(2.375,0.507)--(2.500,0.496)--(2.625,0.486)--(2.750,0.477)--(2.875,0.469)--(3.000,0.462)--(3.125,0.455)--(3.250,0.449);
            \draw[densely dotted] (0.125,5.129)--(0.250,1.804)--(0.375,1.603)--(0.500,1.160)--(0.625,1.034)--(0.750,0.889)--(0.875,0.813)--(1.000,0.744)--(1.125,0.695)--(1.250,0.654)--(1.375,0.622)--(1.500,0.594)--(1.625,0.571)--(1.750,0.551)--(1.875,0.534)--(2.000,0.518)--(2.125,0.505)--(2.250,0.493)--(2.375,0.483);

            \draw (2,3.5)--(3,3.5);\node at(3,3.5) [right] {$m=n$};
            \draw[densely dashed] (2,3.0)--(3,3.0);\node at(3,3.0) [right] {$m=2n$};
            \draw[densely dotted] (2,2.5)--(3,2.5);\node at(3,2.5) [right] {$m=3n$};
        }
        \tikzpic{1}{
            \draw[->](0,0)--(6,0);\node at(6,0)[right]{$\frac{m+n}{mn}$};
            \draw (0,-0.1)grid(5.5,0);\draw (0.5,-0.1)--(0.5,0);\node at(0,-0.1)[below]{$\frac{1}{\infty}$};\node at(0.5,-0.1)[below]{$\frac{1}{8}$};\node at(1,-0.1)[below]{$\frac{1}{4}$};\node at(2,-0.1)[below]{$\frac{1}{2}$};\node at(4,-0.1)[below]{1};
            \draw[->](0,0)--(0,5.5);\node at(0,5.5) [above] {$\kappa_{m\times n}$};
            \draw (-0.1,0)grid(0,5.5);\node at(-0.1,0)[left]{0.28};\node at(-0.1,1)[left]{0.33};\node at(-0.1,2)[left]{0.38};\node at(-0.1,3)[left]{0.43};\node at(-0.1,4)[left]{0.48};\node at(-0.1,5)[left]{0.53};
            \draw (4.000,2.447)--(2.666,2.300)--(2.000,1.586)--(1.600,1.414)--(1.333,1.189)--(1.142,1.077)--(1.000,0.971)--(0.888,0.898)--(0.800,0.836)--(0.727,0.787)--(0.666,0.745)--(0.615,0.711)--(0.571,0.681)--(0.533,0.655)--(0.500,0.632)--(0.470,0.612)--(0.444,0.594)--(0.421,0.578)--(0.400,0.564)--(0.381,0.551)--(0.363,0.539)--(0.347,0.528)--(0.333,0.519)--(0.320,0.509)--(0.307,0.501)--(0.296,0.493)--(0.285,0.486)--(0.275,0.480)--(0.266,0.473)--(0.258,0.467)--(0.250,0.462)--(0.242,0.457)--(0.235,0.452)--(0.228,0.447)--(0.222,0.443)--(0.216,0.439)--(0.210,0.435)--(0.205,0.431);
            \draw[densely dashed] (6.000,5.386)--(3.000,2.011)--(2.000,1.751)--(1.500,1.275)--(1.200,1.125)--(1.000,0.966)--(0.857,0.878)--(0.750,0.801)--(0.666,0.746)--(0.600,0.700)--(0.545,0.663)--(0.500,0.632)--(0.461,0.606)--(0.428,0.583)--(0.400,0.564)--(0.375,0.547)--(0.352,0.532)--(0.333,0.519)--(0.315,0.507)--(0.300,0.496)--(0.285,0.486)--(0.272,0.477)--(0.260,0.469)--(0.250,0.462)--(0.240,0.455)--(0.230,0.449);
            \draw[densely dotted] (5.333,5.129)--(2.666,1.804)--(1.777,1.603)--(1.333,1.160)--(1.066,1.034)--(0.888,0.889)--(0.761,0.813)--(0.666,0.744)--(0.592,0.695)--(0.533,0.654)--(0.484,0.622)--(0.444,0.594)--(0.410,0.571)--(0.381,0.551)--(0.355,0.534)--(0.333,0.518)--(0.313,0.505)--(0.296,0.493)--(0.280,0.483);
        }
    \end{center}

    \subsection{Estimation of constants}

    The charts clearly show that for small $m,n$, the values of $\kappa_{m\times n}$ exhibit significant parity dependence (indicating that the enumeration $N_{m\times n}$ is similarly affected).
    This can be explained through the underlying tile-packing interpretation: since the tile edge length is even, when $n$ is odd, tiles arranged within an even-sized square lattice, which allow more space-efficient configurations compared to odd-sized squares.
    However, as $n$ increases, the contribution of such parity-dependent configurations becomes negligible relative to the total enumeration, causing the parity effect to diminish.

    In addition, when $n$ is sufficiently large, $\kappa_{m\times n}$ converge to $\kappa$ with a deviation approximately proportional to $\frac{m+n}{mn}$.
    Although it is difficult to give a strict proof here, we can give a physical explanation:

    An infinite king graph can be partitioned into infinite $m \times n$ finite king graph,
    so when discussing the enumeration contributed by each vertex, the infinite graph can be interpreted as a finite $m \times n$ graph with \textit{stricter boundary conditions}, i.e., not conflict with the independent set in the adjacent $m\times n$ king graph,
    which reduced the contribution by the vertices on the boundaries.

    Therefore, for sufficiently large $m,  n$, the {boundary effects} occupy an area fraction roughly proportional to $\frac{m+n}{mn}$, which is the ``density'' of the boundaries.
    Consequently, the difference between $\kappa$ and $\kappa_{m\times n}$ due to these changes in boundary constraints are also approximately proportional to $\frac{m+n}{mn}$.
    We denote the ratio by
    \[
        k_{\kappa}\coloneq \lim_{m,n\rightarrow\infty} \frac{\kappa_{m\times n}-\kappa}{\frac{m+n}{mn}} =\lim_{m,n\rightarrow\infty} \frac{(\kappa_{m\times n}-\kappa)mn}{m+n},
    \]
    which gives the approximation of $\kappa_{m\times n}$ and $\ln N_{m\times n}$ when $m,n\rightarrow\infty$:
    \begin{align}\label{eq:kappa_mn}
        \kappa_{m\times n} &=\kappa+k_{\kappa}\cdot\frac{m+n}{mn};       \\
        \ln N_{m\times n}  &= \kappa mn+k_{\kappa}(m+n).
    \end{align}

    According to the According to the law observed above, we can use the existing results ($m\leq39, n\leq42$) to conduct numerical analysis on $\kappa$ and $k$, and get (the $\lessapprox$ symbol marks the boundary that is likely to be closer):
    \begin{align*}
        0.29464076781~5620 &\lessapprox\kappa< 0.29464076781~6145;\\
        0.13549180267~0344 &\lessapprox k     <0.13549180267~3746,
    \end{align*}
    and
    \[
        1.34264395112~3897 \lessapprox e^\kappa< 1.34264395112~4602.
    \]

    The OEIS sequence A247413 provides a value with 10 significant digits of $\kappa$ (calculated with another algorithm in~\cite{const}):
    \begin{equation}\label{eq:kappa}
        e^\kappa = 1.342643951124\ldots \Leftrightarrow \kappa =0.29464076781\ldots.
    \end{equation}
    which is consistent with our results.

    Note that
    \[
        \kappa_{m\times n} =\kappa+k_{\kappa}\cdot\frac{m+n}{mn} +o\left(\frac{m+n}{mn}\right),
    \]
    the error of estimation comes from the term $o\left(\frac{m+n}{mn}\right)$, which is approximately of the same order as $\frac{1}{mn}$, so we can assert that the estimation obtained from the case with larger area $mn$ is more accurate.
    Considering that the space complexity of the algorithm is $\Theta(\theta\sigma_0\sigma^{m-1})$ (and the time complexity is also close to this), that is, the cost of increasing the height $n$ is less than that of increasing the width $m$, we can calculate the case of larger but narrower graphs.
    Here we calculate all the cases of $m\leq34$ and $n\leq100$, and the more accurate estimation is:
    \begin{align*}
        0.294640767816144918~4082&\lessapprox \kappa  <0.294640767816144918~2295;\\
        1.34264395112460129~7851 &\lessapprox e^\kappa<1.34264395112460129~8092 ;\\
        0.1354918026~4626        &\lessapprox k       < 0.1354918026~7004.
    \end{align*}
    Here the estimation of $e^\kappa$ has five more accurate digits than the results in OEIS.

    \section{Vertex number constraints}

    In this section, we compute the enumeration of independent sets under specific vertex number constraints, which equivalently corresponds to the tiling enumeration with restricted numbers of tiles.
    We denote the enumeration of $c$-vertex independent set of an $m\times n$ king graph as $N_{m\times n,c}$.

    \subsection{Algorithm and results}

    Following the algorithm for constrained tiling enumeration described in~\cite{tilEnum}, and applying the tile merging method described in Section 2, we calculated all cases where
    $\lfloor\frac{m+1}{2}\rfloor\lfloor\frac{n+1}{2}\rfloor\leq169$.
    Note that in these cases, the number of vertices is always not greater than 169.

    To reduce the size of the state tensor, we first perform a tiling enumeration with an upper limit of $85$ tiles (discarding cases exceeding $85$ tiles), followed by a tiling enumeration modulo $85$.
    The difference between these two results then gives the enumeration for cases with more than $85$ tiles.

    The table below presents the results of $N_{26\times26,c}$:

    \tiny
    \begin{align*}
        n~~ & N_{m\times n}\\
        1~~ & 1 \\
        2~~ & 676\\
        3~~ & 225600\\
        4~~ & 49553600\\
        5~~ & 8058725534\\
        6~~ & 1034902881936\\
        \ldots&\ldots\\
        96~~ & 28167386774073832059978745704665724536952074690238647572849341190907778280319273024141184\\
        97~~ & 29595870699623036422198914900016026761782408470673221589812956666929724458502743420886580\\
        98~~ & 29841471230559576064030516178272642412720344089273150439553722830764798702419808371257688\\
        99~~ & 28866914666754411802224111001202298696438795180050524431479458758076253380023710258504844\\
        100~~ & 26782702015370312876160118605959589033352968502027811607600959373040503048063529925561244\\
        \ldots&\ldots\\
        166~~ & 51111041090904147713880489165887946\\
        167~~ & 320182849662618325911532557334084\\
        168~~ & 1304024035729212605788041968570\\
        169~~ & 2588716234142991968960920692
    \end{align*}
    \normalsize
    As observed, the enumeration initially increases and then decreases as the vertex number $c$ grows, reaching its maximum at $c = 98$.
    Moreover, the growth rate (i.e., the finite difference) exhibits monotonic decay.

    The chart below presents the results of $N_{26\times26,c},N_{25\times26,c},N_{25\times25,c} $, and provides a clearer illustration of the growth trend by taking the natural logarithm:

    \ctikzpic{0.05}{
        \draw[->] (0,0)--(180,0);\node at(180,0) [right] {$c$};\draw[step=10] (0,0) grid (175,-1);
        \draw[->] (0,0)--(0,100);\draw[step=10] (0,0) grid (-1,95);

        \node at (0,0) [below] {0};\node at (30,0) [below] {30};\node at (60,0) [below] {60};\node at (90,0) [below] {90};
        \node at (120,0) [below] {120};\node at (150,0) [below] {150};
        \draw[dotted](169,0)--(169,27.41);\draw[dotted](98,88.47)--(98,0);
        \draw[dotted](0,27.41)--(169,27.41);\draw[dotted](98,88.47)--(0,88.47);
        \node at (0,0) [left] {$10^{0}$};\node at (0,30) [left] {$10^{30}$};\node at (0,60) [left] {$10^{60}$};\node at (0,90) [left] {$10^{90}$};
        \node at(98,88.47) [above] {${}_{(98,2.98\times 10^{88})}$};\filldraw (98,88.47) circle (1);
        \node at(169,27.41) [right] {${}_{(169,2.59\times 10^{27})}$};\filldraw (169,27.41) circle (1);
        \draw (0,0.0)--(1,2.83)--(2,5.35)--(3,7.7)--(4,9.91)--(5,12.01)--(6,14.04)--(7,15.99)--(8,17.88)--(9,19.71)--(10,21.49)--(11,23.22)--(12,24.91)--(13,26.55)--(14,28.16)--(15,29.73)--(16,31.27)--(17,32.78)--(18,34.25)--(19,35.7)--(20,37.11)--(21,38.5)--(22,39.87)--(23,41.2)--(24,42.51)--(25,43.8)--(26,45.06)--(27,46.3)--(28,47.52)--(29,48.72)--(30,49.89)--(31,51.05)--(32,52.18)--(33,53.29)--(34,54.38)--(35,55.45)--(36,56.51)--(37,57.54)--(38,58.55)--(39,59.55)--(40,60.53)--(41,61.48)--(42,62.43)--(43,63.35)--(44,64.25)--(45,65.14)--(46,66.01)--(47,66.86)--(48,67.7)--(49,68.52)--(50,69.32)--(51,70.11)--(52,70.87)--(53,71.63)--(54,72.36)--(55,73.08)--(56,73.78)--(57,74.47)--(58,75.13)--(59,75.79)--(60,76.42)--(61,77.04)--(62,77.65)--(63,78.24)--(64,78.81)--(65,79.36)--(66,79.9)--(67,80.43)--(68,80.93)--(69,81.42)--(70,81.9)--(71,82.36)--(72,82.8)--(73,83.22)--(74,83.63)--(75,84.03)--(76,84.41)--(77,84.77)--(78,85.11)--(79,85.44)--(80,85.75)--(81,86.05)--(82,86.33)--(83,86.59)--(84,86.83)--(85,87.06)--(86,87.27)--(87,87.47)--(88,87.65)--(89,87.81)--(90,87.95)--(91,88.08)--(92,88.19)--(93,88.28)--(94,88.35)--(95,88.41)--(96,88.45)--(97,88.47)--(98,88.47)--(99,88.46)--(100,88.43)--(101,88.38)--(102,88.31)--(103,88.22)--(104,88.11)--(105,87.99)--(106,87.84)--(107,87.68)--(108,87.5)--(109,87.3)--(110,87.08)--(111,86.84)--(112,86.58)--(113,86.3)--(114,85.99)--(115,85.67)--(116,85.33)--(117,84.97)--(118,84.59)--(119,84.18)--(120,83.76)--(121,83.31)--(122,82.84)--(123,82.35)--(124,81.84)--(125,81.3)--(126,80.74)--(127,80.16)--(128,79.56)--(129,78.93)--(130,78.28)--(131,77.61)--(132,76.91)--(133,76.19)--(134,75.44)--(135,74.67)--(136,73.87)--(137,73.05)--(138,72.2)--(139,71.33)--(140,70.42)--(141,69.5)--(142,68.54)--(143,67.56)--(144,66.54)--(145,65.5)--(146,64.43)--(147,63.33)--(148,62.2)--(149,61.03)--(150,59.84)--(151,58.61)--(152,57.34)--(153,56.04)--(154,54.7)--(155,53.33)--(156,51.91)--(157,50.45)--(158,48.94)--(159,47.39)--(160,45.78)--(161,44.12)--(162,42.4)--(163,40.61)--(164,38.74)--(165,36.78)--(166,34.71)--(167,32.51)--(168,30.12)--(169,27.41);

        \draw[densely dashed](0,0.0)--(1,2.81)--(2,5.32)--(3,7.64)--(4,9.84)--(5,11.93)--(6,13.93)--(7,15.87)--(8,17.73)--(9,19.55)--(10,21.31)--(11,23.02)--(12,24.69)--(13,26.31)--(14,27.9)--(15,29.45)--(16,30.97)--(17,32.46)--(18,33.91)--(19,35.33)--(20,36.73)--(21,38.1)--(22,39.44)--(23,40.75)--(24,42.04)--(25,43.3)--(26,44.54)--(27,45.76)--(28,46.95)--(29,48.12)--(30,49.27)--(31,50.4)--(32,51.51)--(33,52.59)--(34,53.66)--(35,54.7)--(36,55.73)--(37,56.73)--(38,57.72)--(39,58.69)--(40,59.64)--(41,60.57)--(42,61.48)--(43,62.37)--(44,63.25)--(45,64.1)--(46,64.94)--(47,65.77)--(48,66.57)--(49,67.36)--(50,68.12)--(51,68.88)--(52,69.61)--(53,70.33)--(54,71.03)--(55,71.71)--(56,72.38)--(57,73.03)--(58,73.66)--(59,74.27)--(60,74.87)--(61,75.45)--(62,76.02)--(63,76.57)--(64,77.1)--(65,77.61)--(66,78.11)--(67,78.59)--(68,79.06)--(69,79.5)--(70,79.93)--(71,80.35)--(72,80.75)--(73,81.13)--(74,81.49)--(75,81.83)--(76,82.16)--(77,82.48)--(78,82.77)--(79,83.05)--(80,83.31)--(81,83.55)--(82,83.78)--(83,83.99)--(84,84.18)--(85,84.35)--(86,84.5)--(87,84.64)--(88,84.76)--(89,84.86)--(90,84.95)--(91,85.01)--(92,85.06)--(93,85.09)--(94,85.1)--(95,85.09)--(96,85.06)--(97,85.02)--(98,84.95)--(99,84.87)--(100,84.76)--(101,84.64)--(102,84.5)--(103,84.33)--(104,84.15)--(105,83.95)--(106,83.73)--(107,83.48)--(108,83.22)--(109,82.93)--(110,82.63)--(111,82.3)--(112,81.95)--(113,81.58)--(114,81.19)--(115,80.78)--(116,80.34)--(117,79.89)--(118,79.41)--(119,78.9)--(120,78.38)--(121,77.83)--(122,77.25)--(123,76.66)--(124,76.04)--(125,75.39)--(126,74.72)--(127,74.03)--(128,73.31)--(129,72.56)--(130,71.79)--(131,70.99)--(132,70.17)--(133,69.32)--(134,68.44)--(135,67.54)--(136,66.61)--(137,65.65)--(138,64.66)--(139,63.65)--(140,62.6)--(141,61.53)--(142,60.43)--(143,59.29)--(144,58.13)--(145,56.94)--(146,55.71)--(147,54.45)--(148,53.16)--(149,51.84)--(150,50.49)--(151,49.1)--(152,47.67)--(153,46.21)--(154,44.71)--(155,43.17)--(156,41.59)--(157,39.97)--(158,38.3)--(159,36.58)--(160,34.81)--(161,32.99)--(162,31.1)--(163,29.14)--(164,27.1)--(165,24.97)--(166,22.73)--(167,20.35)--(168,17.78)--(169,14.9);
        \filldraw (94,85.1) circle (1);\filldraw (169,14.9) circle (1);

        \draw[densely dotted](0,0.0)--(1,2.8)--(2,5.28)--(3,7.59)--(4,9.77)--(5,11.84)--(6,13.83)--(7,15.74)--(8,17.59)--(9,19.38)--(10,21.12)--(11,22.82)--(12,24.47)--(13,26.07)--(14,27.64)--(15,29.17)--(16,30.67)--(17,32.13)--(18,33.57)--(19,34.97)--(20,36.34)--(21,37.69)--(22,39.0)--(23,40.29)--(24,41.56)--(25,42.8)--(26,44.02)--(27,45.21)--(28,46.38)--(29,47.52)--(30,48.65)--(31,49.75)--(32,50.83)--(33,51.89)--(34,52.93)--(35,53.95)--(36,54.94)--(37,55.92)--(38,56.88)--(39,57.82)--(40,58.74)--(41,59.64)--(42,60.52)--(43,61.38)--(44,62.23)--(45,63.05)--(46,63.86)--(47,64.65)--(48,65.42)--(49,66.18)--(50,66.91)--(51,67.63)--(52,68.33)--(53,69.01)--(54,69.67)--(55,70.32)--(56,70.95)--(57,71.56)--(58,72.16)--(59,72.73)--(60,73.29)--(61,73.83)--(62,74.36)--(63,74.87)--(64,75.36)--(65,75.83)--(66,76.28)--(67,76.72)--(68,77.14)--(69,77.54)--(70,77.93)--(71,78.29)--(72,78.64)--(73,78.98)--(74,79.29)--(75,79.59)--(76,79.86)--(77,80.12)--(78,80.37)--(79,80.59)--(80,80.8)--(81,80.99)--(82,81.16)--(83,81.31)--(84,81.44)--(85,81.56)--(86,81.65)--(87,81.73)--(88,81.79)--(89,81.83)--(90,81.85)--(91,81.85)--(92,81.83)--(93,81.79)--(94,81.73)--(95,81.65)--(96,81.55)--(97,81.44)--(98,81.3)--(99,81.14)--(100,80.96)--(101,80.76)--(102,80.54)--(103,80.29)--(104,80.03)--(105,79.74)--(106,79.44)--(107,79.11)--(108,78.76)--(109,78.38)--(110,77.99)--(111,77.57)--(112,77.12)--(113,76.66)--(114,76.17)--(115,75.66)--(116,75.12)--(117,74.56)--(118,73.97)--(119,73.36)--(120,72.73)--(121,72.07)--(122,71.38)--(123,70.67)--(124,69.93)--(125,69.16)--(126,68.37)--(127,67.55)--(128,66.71)--(129,65.83)--(130,64.93)--(131,64.0)--(132,63.05)--(133,62.06)--(134,61.05)--(135,60.01)--(136,58.94)--(137,57.84)--(138,56.71)--(139,55.55)--(140,54.36)--(141,53.14)--(142,51.89)--(143,50.61)--(144,49.3)--(145,47.95)--(146,46.58)--(147,45.17)--(148,43.73)--(149,42.25)--(150,40.73)--(151,39.17)--(152,37.58)--(153,35.94)--(154,34.26)--(155,32.52)--(156,30.74)--(157,28.91)--(158,27.02)--(159,25.07)--(160,23.05)--(161,20.96)--(162,18.79)--(163,16.53)--(164,14.17)--(165,11.7)--(166,9.1)--(167,6.33)--(168,3.34)--(169,0.0);
        \filldraw (91,81.85) circle (1);\filldraw (169,0) circle (1);
        \draw (160,90)--(180,90);\node at(180,90) [right] {$N_{26\times26,c}$};
        \draw[densely dashed] (160,80)--(180,80);\node at(180,80) [right] {$N_{25\times26,c}$};
        \draw[densely dotted] (160,70)--(180,70);\node at(180,70) [right] {$N_{25\times25,c}$};
    }
    It can be observed that for these three different $(m,n)$, the maximum value of $c$ yielding a non-zero enumeration is consistently 169.
    While $N_{m\times n, c}$ exhibits similar growth trends with increasing $c$ across all cases, their final values (i.e., the enumeration of the maximum independent set) differ.

    Notably, when both $m$ and $n$ are odd, we have the trivial case $N_{m\times n, c}=1$, corresponding to the unique maximum independent set where vertices form a lattice with step size 2 that completely tiles the graph, as illustrated:
    \ctikzpic{0.6}{
        \draw (0,0) grid (4,4);
        \draw (0,3)--(1,4); \draw (0,2)--(2,4); \draw (0,1)--(3,4); \draw (0,0)--(4,4); \draw (1,0)--(4,3); \draw (2,0)--(4,2); \draw (3,0)--(4,1);
        \draw (4,3)--(3,4); \draw (4,2)--(2,4); \draw (4,1)--(1,4); \draw (4,0)--(0,4); \draw (3,0)--(0,3); \draw (2,0)--(0,2); \draw (1,0)--(0,1);
        \node at(0,0) {\wking};\filldraw[fill=white] (0,1) circle (0.1);\node at(0,2) {\wking};\filldraw[fill=white] (0,3) circle (0.1);\node at(0,4) {\wking};
        \filldraw[fill=white] (1,0) circle (0.1);\filldraw[fill=white] (1,1) circle (0.1);\filldraw[fill=white] (1,2) circle (0.1);\filldraw[fill=white] (1,3) circle (0.1);\filldraw[fill=white] (1,4) circle (0.1);
        \node at(2,0) {\wking};\filldraw[fill=white] (2,1) circle (0.1);\node at(2,2) {\wking};\filldraw[fill=white] (2,3) circle (0.1);\node at(2,4) {\wking};
        \filldraw[fill=white] (3,0) circle (0.1);\filldraw[fill=white] (3,1) circle (0.1);\filldraw[fill=white] (3,2) circle (0.1);\filldraw[fill=white] (3,3) circle (0.1);\filldraw[fill=white] (3,4) circle (0.1);
        \node at(4,0) {\wking};\filldraw[fill=white] (4,1) circle (0.1);\node at(4,2) {\wking};\filldraw[fill=white] (4,3) circle (0.1);\node at(4,4) {\wking};
    }
    And when one of them is even (wlog, let the height $n$ be even), the maximum independent set can be viewed as allowing the kings to ``slide'' vertically like beads on an abacus (with a gap of height $1$, the gray areas in the following figure) based on the both-odd cases.
    \begin{center}
        \tikzpic{0.6}{
            \filldraw[fill=gray!20] (-1.5,-0.5) rectangle (4.5,6.5);

            \drawOiv{-0.5}{0.5};\drawOiv{-0.5}{2.5};\drawOiv{-0.5}{4.5};
            \drawOiv{1.5}{0.5};\drawOiv{1.5}{2.5};\drawOiv{1.5}{4.5};
            \drawOiv{3.5}{0.5};\drawOiv{3.5}{2.5};\drawOiv{3.5}{4.5};

            \draw (0,0) grid (4,5);
            \draw (0,4)--(1,5);\draw (0,3)--(2,5);\draw (0,3)--(2,5); \draw (0,2)--(3,5); \draw (0,1)--(4,5); \draw (0,0)--(4,4); \draw (1,0)--(4,3); \draw (2,0)--(4,2); \draw (3,0)--(4,1);
            \draw (4,4)--(3,5);\draw (4,3)--(2,5); \draw (4,2)--(1,5); \draw (4,1)--(0,5); \draw (4,0)--(0,4); \draw (3,0)--(0,3); \draw (2,0)--(0,2); \draw (1,0)--(0,1);

            \node at(0,0) {\wking};\filldraw[fill=white] (0,1) circle (0.1);\node at(0,2) {\wking};\filldraw[fill=white] (0,3) circle (0.1);\node at(0,4) {\wking};\filldraw[fill=white] (0,5) circle (0.1);
            \filldraw[fill=white] (1,0) circle (0.1);\filldraw[fill=white] (1,1) circle (0.1);\filldraw[fill=white] (1,2) circle (0.1);\filldraw[fill=white] (1,3) circle (0.1);\filldraw[fill=white] (1,4) circle (0.1);\filldraw[fill=white] (1,5) circle (0.1);
            \node at(2,0) {\wking};\filldraw[fill=white] (2,1) circle (0.1);\node at(2,2) {\wking};\filldraw[fill=white] (2,3) circle (0.1);\node at(2,4) {\wking};\filldraw[fill=white] (2,5) circle (0.1);
            \filldraw[fill=white] (3,0) circle (0.1);\filldraw[fill=white] (3,1) circle (0.1);\filldraw[fill=white] (3,2) circle (0.1);\filldraw[fill=white] (3,3) circle (0.1);\filldraw[fill=white] (3,4) circle (0.1);\filldraw[fill=white] (3,5) circle (0.1);
            \node at(4,0) {\wking};\filldraw[fill=white] (4,1) circle (0.1);\node at(4,2) {\wking};\filldraw[fill=white] (4,3) circle (0.1);\node at(4,4) {\wking};\filldraw[fill=white] (4,5) circle (0.1);
        }~~~~
        \tikzpic{0.6}{
            \filldraw[fill=gray!20] (-1.5,-0.5) rectangle (4.5,6.5);

            \drawOiv{-0.5}{0.5};\drawOiv{-0.5}{2.5};\drawOiv{-0.5}{4.5};
            \drawOiv{1.5}{0.5};\drawOiv{1.5}{2.5};\drawOiv{1.5}{5.5};
            \drawOiv{3.5}{0.5};\drawOiv{3.5}{3.5};\drawOiv{3.5}{5.5};

            \draw (0,0) grid (4,5);
            \draw (0,4)--(1,5);\draw (0,3)--(2,5);\draw (0,3)--(2,5); \draw (0,2)--(3,5); \draw (0,1)--(4,5); \draw (0,0)--(4,4); \draw (1,0)--(4,3); \draw (2,0)--(4,2); \draw (3,0)--(4,1);
            \draw (4,4)--(3,5);\draw (4,3)--(2,5); \draw (4,2)--(1,5); \draw (4,1)--(0,5); \draw (4,0)--(0,4); \draw (3,0)--(0,3); \draw (2,0)--(0,2); \draw (1,0)--(0,1);
            \node at(0,0) {\wking};\filldraw[fill=white] (0,1) circle (0.1);\node at(0,2) {\wking};\filldraw[fill=white] (0,3) circle (0.1);\node at(0,4) {\wking};\filldraw[fill=white] (0,5) circle (0.1);
            \filldraw[fill=white] (1,0) circle (0.1);\filldraw[fill=white] (1,1) circle (0.1);\filldraw[fill=white] (1,2) circle (0.1);\filldraw[fill=white] (1,3) circle (0.1);\filldraw[fill=white] (1,4) circle (0.1);\filldraw[fill=white] (1,5) circle (0.1);
            \node at(2,0) {\wking};\filldraw[fill=white] (2,1) circle (0.1);\node at(2,2) {\wking};\filldraw[fill=white] (2,3) circle (0.1);\filldraw[fill=white] (2,4) circle (0.1);\node at(2,5) {\wking};
            \filldraw[fill=white] (3,0) circle (0.1);\filldraw[fill=white] (3,1) circle (0.1);\filldraw[fill=white] (3,2) circle (0.1);\filldraw[fill=white] (3,3) circle (0.1);\filldraw[fill=white] (3,4) circle (0.1);\filldraw[fill=white] (3,5) circle (0.1);
            \node at(4,0) {\wking};\filldraw[fill=white] (4,1) circle (0.1);\filldraw[fill=white] (4,2) circle (0.1);\node at(4,3) {\wking};\filldraw[fill=white] (4,4) circle (0.1);\node at(4,5) {\wking};
        }
    \end{center}
    It is straightforward to observe that in this configuration there are exactly $\frac{m+1}{2}$ columns of kings, each with $\frac{n}{2}+1$ possible positions for the gaps,
    so we have the enumeration of maximum independent set
    \[
        N_{m\times n,\lfloor\frac{m+1}{2}\rfloor\lfloor\frac{n+1}{2}\rfloor}=
        \begin{cases}
            1, & 2\nmid m,2\nmid n; \\
            \left(\frac{n}{2}+1\right)^{\frac{m+1}{2}}, & 2\nmid m,2\mid n; \\
            \left(\frac{m}{2}+1\right)^{\frac{n+1}{2}}, & 2\mid m,2\nmid n; \\
            \ldots, & 2\mid m,2\nmid n.
        \end{cases}
    \]

    Only in cases where both $m$ and $n$ are even does the enumeration of maximum independent sets lack a simple closed-form formula,
    while simultaneously allowing greater degrees of freedom for king moves, so that the enumeration is significantly larger than in the adjacent cases.
    \cite{MaxInd, MaxInd2} provide further discussion on maximum independent sets in king's graphs.

    \subsection{Maximum entropy density}

    We now examine the vertex number that maximizes the independent set enumeration.
    The ratio of this vertex number to the total number of vertices corresponds to the \textit{maximum entropy density} in the hard-core model,
    which is the particle density that maximizes the total entropy of the system.
    Denote by $c_{m\times n}$ the value of $c$ that maximizes $N_{m\times n, c}$ for given $m$ and $n$, and when there are two adjacent maximum points, take the smaller one.
    And denote by
    \[
        \hat{N}_{m\times n}\coloneq N_{m \times n, c_{m\times n}}
    \]
    the maximum enumeration.

    The following table shows the data of $c_{m\times n}$ in the case of $m, n\leq 27$:

    \begin{center}
        \tiny
        \begin{tabular}{r|l}
            \hline
            ${}_m{}^n$ & 1~~\phantom{0}2~~\phantom{0}3~~\phantom{0}4~~\phantom{0}5~~\phantom{0}6~~\phantom{0}7~~\phantom{0}8~~\phantom{0}9~~10~~11~~12~~13~~14~~15~~16~~17~~18~~19~~20~~21~~22~~23~~24~~25~~26 \\
            \hline
            1 & 0~~\phantom{0}1~~\phantom{0}1~~\phantom{0}1~~\phantom{0}2~~\phantom{0}2~~\phantom{0}2~~\phantom{0}2~~\phantom{0}3~~\phantom{0}3~~\phantom{0}3~~\phantom{0}4~~\phantom{0}4~~\phantom{0}4~~\phantom{0}4~~\phantom{0}5~~\phantom{0}5~~\phantom{0}5~~\phantom{0}5~~\phantom{0}6~~\phantom{0}6~~\phantom{0}6~~\phantom{0}7~~\phantom{0}7~~\phantom{0}7~~\phantom{0}7 \\
            2 & 1~~\phantom{0}1~~\phantom{0}1~~\phantom{0}2~~\phantom{0}2~~\phantom{0}2~~\phantom{0}3~~\phantom{0}3~~\phantom{0}3~~\phantom{0}4~~\phantom{0}4~~\phantom{0}4~~\phantom{0}5~~\phantom{0}5~~\phantom{0}5~~\phantom{0}6~~\phantom{0}6~~\phantom{0}6~~\phantom{0}7~~\phantom{0}7~~\phantom{0}7~~\phantom{0}8~~\phantom{0}8~~\phantom{0}8~~\phantom{0}9~~\phantom{0}9 \\
            3 & 1~~\phantom{0}1~~\phantom{0}2~~\phantom{0}2~~\phantom{0}3~~\phantom{0}4~~\phantom{0}4~~\phantom{0}5~~\phantom{0}5~~\phantom{0}6~~\phantom{0}6~~\phantom{0}7~~\phantom{0}7~~\phantom{0}8~~\phantom{0}8~~\phantom{0}9~~\phantom{0}9~~10~~10~~11~~11~~12~~13~~13~~14~~14 \\
            4 & 1~~\phantom{0}2~~\phantom{0}2~~\phantom{0}3~~\phantom{0}4~~\phantom{0}4~~\phantom{0}5~~\phantom{0}6~~\phantom{0}6~~\phantom{0}7~~\phantom{0}7~~\phantom{0}8~~\phantom{0}9~~\phantom{0}9~~10~~11~~11~~12~~13~~13~~14~~14~~15~~16~~16~~17 \\
            5 & 2~~\phantom{0}2~~\phantom{0}3~~\phantom{0}4~~\phantom{0}4~~\phantom{0}5~~\phantom{0}6~~\phantom{0}7~~\phantom{0}8~~\phantom{0}8~~\phantom{0}9~~10~~11~~12~~12~~13~~14~~15~~16~~16~~17~~18~~19~~20~~20~~21 \\
            6 & 2~~\phantom{0}2~~\phantom{0}4~~\phantom{0}4~~\phantom{0}5~~\phantom{0}6~~\phantom{0}7~~\phantom{0}8~~\phantom{0}9~~10~~11~~12~~13~~14~~14~~15~~16~~17~~18~~19~~20~~21~~22~~23~~24~~24 \\
            7 & 2~~\phantom{0}3~~\phantom{0}4~~\phantom{0}5~~\phantom{0}6~~\phantom{0}7~~\phantom{0}8~~\phantom{0}9~~10~~11~~12~~13~~15~~16~~17~~18~~19~~20~~21~~22~~23~~24~~25~~26~~27~~28 \\
            8 & 2~~\phantom{0}3~~\phantom{0}5~~\phantom{0}6~~\phantom{0}7~~\phantom{0}8~~\phantom{0}9~~10~~12~~13~~14~~15~~16~~18~~19~~20~~21~~22~~24~~25~~26~~27~~28~~29~~31~~32 \\
            9 & 3~~\phantom{0}3~~\phantom{0}5~~\phantom{0}6~~\phantom{0}8~~\phantom{0}9~~10~~12~~13~~14~~16~~17~~18~~20~~21~~22~~24~~25~~26~~28~~29~~30~~32~~33~~34~~36 \\
            10 & 3~~\phantom{0}4~~\phantom{0}6~~\phantom{0}7~~\phantom{0}8~~10~~11~~13~~14~~16~~17~~19~~20~~22~~23~~25~~26~~27~~29~~30~~32~~33~~35~~36~~38~~39 \\
            11 & 3~~\phantom{0}4~~\phantom{0}6~~\phantom{0}7~~\phantom{0}9~~11~~12~~14~~16~~17~~19~~20~~22~~24~~25~~27~~28~~30~~32~~33~~35~~36~~38~~40~~41~~43 \\
            12 & 4~~\phantom{0}4~~\phantom{0}7~~\phantom{0}8~~10~~12~~13~~15~~17~~19~~20~~22~~24~~26~~27~~29~~31~~33~~34~~36~~38~~40~~41~~43~~45~~47 \\
            13 & 4~~\phantom{0}5~~\phantom{0}7~~\phantom{0}9~~11~~13~~15~~16~~18~~20~~22~~24~~26~~28~~30~~31~~33~~35~~37~~39~~41~~43~~45~~46~~48~~50 \\
            14 & 4~~\phantom{0}5~~\phantom{0}8~~\phantom{0}9~~12~~14~~16~~18~~20~~22~~24~~26~~28~~30~~32~~34~~36~~38~~40~~42~~44~~46~~48~~50~~52~~54 \\
            15 & 4~~\phantom{0}5~~\phantom{0}8~~10~~12~~14~~17~~19~~21~~23~~25~~27~~30~~32~~34~~36~~38~~40~~42~~45~~47~~49~~51~~53~~55~~57 \\
            16 & 5~~\phantom{0}6~~\phantom{0}9~~11~~13~~15~~18~~20~~22~~25~~27~~29~~31~~34~~36~~38~~41~~43~~45~~47~~50~~52~~54~~57~~59~~61 \\
            17 & 5~~\phantom{0}6~~\phantom{0}9~~11~~14~~16~~19~~21~~24~~26~~28~~31~~33~~36~~38~~41~~43~~45~~48~~50~~53~~55~~58~~60~~62~~65 \\
            18 & 5~~\phantom{0}6~~10~~12~~15~~17~~20~~22~~25~~27~~30~~33~~35~~38~~40~~43~~45~~48~~51~~53~~56~~58~~61~~63~~66~~68 \\
            19 & 5~~\phantom{0}7~~10~~13~~16~~18~~21~~24~~26~~29~~32~~34~~37~~40~~42~~45~~48~~51~~53~~56~~59~~61~~64~~67~~69~~72 \\
            20 & 6~~\phantom{0}7~~11~~13~~16~~19~~22~~25~~28~~30~~33~~36~~39~~42~~45~~47~~50~~53~~56~~59~~62~~64~~67~~70~~73~~76 \\
            21 & 6~~\phantom{0}7~~11~~14~~17~~20~~23~~26~~29~~32~~35~~38~~41~~44~~47~~50~~53~~56~~59~~62~~65~~68~~71~~73~~76~~79 \\
            22 & 6~~\phantom{0}8~~12~~14~~18~~21~~24~~27~~30~~33~~36~~40~~43~~46~~49~~52~~55~~58~~61~~64~~68~~71~~74~~77~~80~~83 \\
            23 & 7~~\phantom{0}8~~13~~15~~19~~22~~25~~28~~32~~35~~38~~41~~45~~48~~51~~54~~58~~61~~64~~67~~71~~74~~77~~80~~83~~87 \\
            24 & 7~~\phantom{0}8~~13~~16~~20~~23~~26~~29~~33~~36~~40~~43~~46~~50~~53~~57~~60~~63~~67~~70~~73~~77~~80~~84~~87~~90 \\
            25 & 7~~\phantom{0}9~~14~~16~~20~~24~~27~~31~~34~~38~~41~~45~~48~~52~~55~~59~~62~~66~~69~~73~~76~~80~~83~~87~~91~~94 \\
            26 & 7~~\phantom{0}9~~14~~17~~21~~24~~28~~32~~36~~39~~43~~47~~50~~54~~57~~61~~65~~68~~72~~76~~79~~83~~87~~90~~94~~98 \\
            27 & 8~~\phantom{0}9~~15~~18~~22~~25~~29~~33~~37~~41~~44~~48~~52~~56~~60~~63~~67~~71~~75~~79~~82~~86~~90~~94~~98~~-- \\
            \hline
        \end{tabular}
        \normalsize
    \end{center}

    Noticed that $c_{m\times n}$ is relatively small but constrained to be integer-valued, we employ quadratic polynomial interpolation to obtain an approximation $\bar{c}_{m\times n}$ for better characterization of its variation pattern.
    Specifically, when $c_{m\times n}\geq 1$ (if not, take $\bar{c}_{m\times n}=c_{m\times n}$), we construct a quadratic polynomial $f$ satisfying
    \[
        \begin{cases}
            f(c_{m\times n}-1) &= \ln N_{m\times n,c_{m\times n}-1};\\
            f(c_{m\times n})   &= \ln N_{m\times n, c_{m\times n}};\\
            f(c_{m\times n}+1) &= \ln N_{m\times n,c_{m\times n}+1},
        \end{cases}
    \]
    then use its maximum point $\bar{c}_{m\times n}$ to take the place of $c_{m\times n}$.
    In accordance with our previously established patterns (the monotonicity of the finite differences), we have:
    \[
        c_{m\times n}=\lfloor \bar{c}_{m\times n}+ 0.5 \rfloor.
    \]
    Although such interpolation can also give the corresponding $\bar{N}_{m\times n}$, we still use the original $N_{m\times n}$,
    because when $m, n$ are large, the difference between $\ln\bar{N}_{m\times n}$ and $\ln N_{m\times n}$ can be ignored.

    We define the (interpolation adjusted) maximum entropy density as
    \[
        \rho_{m\times n}\coloneq\frac{\bar{c}_{m\times n}}{mn}.
    \]
    The following charts record the cases of $m=n, 2n,3n$, which clearly demonstrates the growth trend of $\bar{\rho}_{m\times n}$:

    \begin{center}
        \tikzpic{1}{
            \draw[->](0,0)--(3.75,0);\node at(3.75,0)[right]{$n$};
            \draw[step=1.25] (0,-0.1)grid(2.5,0);\node at(0,-0.1)[below]{0};\node at(1.25,-0.1)[below]{10};\node at(2.5,-0.1)[below]{20};
            \draw[->](0,0)--(0,4);\node at(0,4) [above] {$\bar{\rho}_{m\times n}$};
            \draw (-0.1,0)grid(0,3.5);\node at(-0.1,0)[left]{0.0};\node at(-0.1,1)[left]{0.4};\node at(-0.1,2)[left]{0.8};\node at(-0.1,3)[left]{1.2};
            \draw                 (0.250,1.406)--(0.375,0.964)--(0.500,0.733)--(0.625,0.646)--(0.750,0.583)--(0.875,0.544)--(1.000,0.514)--(1.125,0.493)--(1.250,0.476)--(1.375,0.462)--(1.500,0.451)--(1.625,0.442)--(1.750,0.434)--(1.875,0.427)--(2.000,0.421)--(2.125,0.416)--(2.250,0.412)--(2.375,0.408)--(2.500,0.404)--(2.625,0.401)--(2.750,0.398)--(2.875,0.396)--(3.000,0.393)--(3.125,0.391);
            \draw[densely dashed] (0.125,3.750)--(0.250,1.171)--(0.375,0.758)--(0.500,0.613)--(0.625,0.557)--(0.750,0.512)--(0.875,0.486)--(1.000,0.465)--(1.125,0.451)--(1.250,0.439)--(1.375,0.429)--(1.500,0.421);
            \draw[densely dotted] (0.125,2.222)--(0.250,0.854)--(0.375,0.700)--(0.500,0.571)--(0.625,0.529)--(0.750,0.489)--(0.875,0.468)--(1.000,0.450);

            \draw (1.5,3.5)--(2.5,3.5);\node at(2.5,3.5) [right] {$m=n$};
            \draw[densely dashed] (1.5,3.0)--(2.5,3.0);\node at(2.5,3.0) [right] {$m=2n$};
            \draw[densely dotted] (1.5,2.5)--(2.5,2.5);\node at(2.5,2.5) [right] {$m=3n$};
        }
        \tikzpic{1}{
            \draw[->](0,0)--(6,0);\node at(6,0)[right]{$\frac{m+n}{mn}$};
            \draw (0,-0.1)grid(5.5,0);\draw (0.5,-0.1)--(0.5,0);\node at(0,-0.1)[below]{$\frac{1}{\infty}$};\node at(0.5,-0.1)[below]{$\frac{1}{8}$};\node at(1,-0.1)[below]{$\frac{1}{4}$};\node at(2,-0.1)[below]{$\frac{1}{2}$};\node at(4,-0.1)[below]{1};
            \draw[->](0,0)--(0,4);\node at(0,4) [above] {$\bar{\rho}_{m\times n}$};
            \draw (-0.1,0)grid(0,3.5);\node at(-0.1,0)[left]{0.0};\node at(-0.1,1)[left]{0.4};\node at(-0.1,2)[left]{0.8};\node at(-0.1,3)[left]{1.2};
            \draw                 (4.000,1.406)--(2.666,0.964)--(2.000,0.733)--(1.600,0.646)--(1.333,0.583)--(1.142,0.544)--(1.000,0.514)--(0.888,0.493)--(0.800,0.476)--(0.727,0.462)--(0.666,0.451)--(0.615,0.442)--(0.571,0.434)--(0.533,0.427)--(0.500,0.421)--(0.470,0.416)--(0.444,0.412)--(0.421,0.408)--(0.400,0.404)--(0.381,0.401)--(0.363,0.398)--(0.347,0.396)--(0.333,0.393)--(0.320,0.391);
            \draw[densely dashed] (6.000,3.750)--(3.000,1.171)--(2.000,0.758)--(1.500,0.613)--(1.200,0.557)--(1.000,0.512)--(0.857,0.486)--(0.750,0.465)--(0.666,0.451)--(0.600,0.439)--(0.545,0.429)--(0.500,0.421);
            \draw[densely dotted] (5.333,2.222)--(2.666,0.854)--(1.777,0.700)--(1.333,0.571)--(1.066,0.529)--(0.888,0.489)--(0.761,0.468)--(0.666,0.450);
        }
    \end{center}

    It can be observed that the trend of $\bar{\rho}_{m\times n}$ resemble that of $\kappa_{m\times n}$ in the previous chapter.
    Although a rigorous proof cannot be provided, we may hypothesize the existence of the limits:
    \begin{align*}
        \bar{\rho}     & \coloneq \bar{\rho}_{m\times n}; \\
        k_{\bar{\rho}} & \coloneq (\bar{\rho}_{m\times n}-\bar{\rho})(m+n).
    \end{align*}
    Consequently, when $m,n\rightarrow\infty$, we can approximate $\rho$ as:
    \[
        \bar{\rho}_{m\times n} \sim \bar{\rho}+\frac{k_{\bar{\rho}}}{m+n}, \\
    \]
    and furthermore,
    \[
        c_{m\times n} \sim \left(\bar{\rho}+\frac{k_{\bar{\rho}}}{m+n}\right)mn. \\
    \]

    Unfortunately, because we need to rely on interpolation to make up for the continuity of the data, the relevant data of K does not have very good characteristics, so we can only give an approximate estimate rather than the upper and lower bounds.
    Based on available data, we estimate (after the $\pm$ sign is the approximate error range):
    \begin{align*}
        \bar{\rho}       &\approx 0.1366741 \pm 0.0002374; \\
        k_{\bar{\rho}}   &\approx 0.102081 \pm 0.00275.
    \end{align*}
    Similarly, for
    \[
        \hat{\kappa}_{m\times n}\coloneq\frac{\ln\hat{N}_{m\times n}}{mn},
    \]
    we have:
    \begin{center}
        \tikzpic{1}{
            \draw[->](0,0)--(3.75,0);\node at(3.75,0)[right]{$n$};
            \draw[step=1.25] (0,-0.1)grid(3.5,0);\node at(0,-0.1)[below]{0};\node at(1.25,-0.1)[below]{10};\node at(2.5,-0.1)[below]{20};
            \draw[->](0,0)--(0,4);\node at(0,4) [above] {$\hat{\kappa}_{m\times n}$};
            \draw (-0.1,0)grid(0,3.5);\node at(-0.1,0)[left]{0.29};\node at(-0.1,1)[left]{0.31};\node at(-0.1,2)[left]{0.33};\node at(-0.1,3)[left]{0.35};
            \draw (0.250,2.828)--(0.375,0.903)--(0.500,0.942)--(0.625,0.688)--(0.750,0.832)--(0.875,0.823)--(1.000,0.787)--(1.125,0.794)--(1.250,0.772)--(1.375,0.758)--(1.500,0.741)--(1.625,0.724)--(1.750,0.707)--(1.875,0.693)--(2.000,0.678)--(2.125,0.665)--(2.250,0.652)--(2.375,0.639)--(2.500,0.627)--(2.625,0.616)--(2.750,0.605)--(2.875,0.595)--(3.000,0.585)--(3.125,0.576)--(3.250,0.568);
            \draw[densely dashed] (0.125,2.828)--(0.250,1.030)--(0.375,0.837)--(0.500,0.872)--(0.625,0.908)--(0.750,0.877)--(0.875,0.851)--(1.000,0.822)--(1.125,0.793)--(1.250,0.761)--(1.375,0.735)--(1.500,0.713)--(1.625,0.691)--(1.750,0.671)--(1.875,0.652)--(2.000,0.635)--(2.125,0.619);
            \draw[densely dotted] (0.125,3.810)--(0.250,0.870)--(0.375,1.326)--(0.500,1.061)--(0.625,1.036)--(0.750,0.953)--(0.875,0.912)--(1.000,0.853)--(1.125,0.818)--(1.250,0.780)--(1.375,0.748)--(1.500,0.718)--(1.625,0.692);

            \draw (1.5,3.5)--(2.5,3.5);\node at(2.5,3.5) [right] {$m=n$};
            \draw[densely dashed] (1.5,3.0)--(2.5,3.0);\node at(2.5,3.0) [right] {$m=2n$};
            \draw[densely dotted] (1.5,2.5)--(2.5,2.5);\node at(2.5,2.5) [right] {$m=3n$};
        }
        \tikzpic{1}{
            \draw[->](0,0)--(6,0);\node at(6,0)[right]{$\frac{m+n}{mn}$};
            \draw (0,-0.1)grid(5.5,0);\draw (0.5,-0.1)--(0.5,0);\node at(0,-0.1)[below]{$\frac{1}{\infty}$};\node at(0.5,-0.1)[below]{$\frac{1}{8}$};\node at(1,-0.1)[below]{$\frac{1}{4}$};\node at(2,-0.1)[below]{$\frac{1}{2}$};\node at(4,-0.1)[below]{1};
            \draw[->](0,0)--(0,4);\node at(0,4) [above] {$\hat{\kappa}_{m\times n}$};
            \draw (-0.1,0)grid(0,3.5);\node at(-0.1,0)[left]{0.29};\node at(-0.1,1)[left]{0.31};\node at(-0.1,2)[left]{0.33};\node at(-0.1,3)[left]{0.35};
            \draw (4.000,2.828)--(2.666,0.903)--(2.000,0.942)--(1.600,0.688)--(1.333,0.832)--(1.142,0.823)--(1.000,0.787)--(0.888,0.794)--(0.800,0.772)--(0.727,0.758)--(0.666,0.741)--(0.615,0.724)--(0.571,0.707)--(0.533,0.693)--(0.500,0.678)--(0.470,0.665)--(0.444,0.652)--(0.421,0.639)--(0.400,0.627)--(0.381,0.616)--(0.363,0.605)--(0.347,0.595)--(0.333,0.585)--(0.320,0.576)--(0.307,0.568);
            \draw[densely dashed] (6.000,2.828)--(3.000,1.030)--(2.000,0.837)--(1.500,0.872)--(1.200,0.908)--(1.000,0.877)--(0.857,0.851)--(0.750,0.822)--(0.666,0.793)--(0.600,0.761)--(0.545,0.735)--(0.500,0.713)--(0.461,0.691)--(0.428,0.671)--(0.400,0.652)--(0.375,0.635)--(0.352,0.619);
            \draw[densely dotted] (5.333,3.810)--(2.666,0.870)--(1.777,1.326)--(1.333,1.061)--(1.066,1.036)--(0.888,0.953)--(0.761,0.912)--(0.666,0.853)--(0.592,0.818)--(0.533,0.780)--(0.484,0.748)--(0.444,0.718)--(0.410,0.692);
        }
    \end{center}
    In fact, when $m,n\rightarrow0$, both $\kappa_{m\times n}$ and $\hat{\kappa}_{m\times n}$ converge to the same constant, i.e.,
    \begin{proposition}
        \[
            \lim_{m,n\rightarrow\infty}\hat{\kappa}_{m\times n}=\kappa.
        \]
    \end{proposition}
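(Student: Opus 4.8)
The plan is a standard ``largest-term-dominates'' argument, exploiting that $N_{m\times n}$ is a sum of only polynomially many nonnegative terms $N_{m\times n,c}$, combined with the already-established convergence $\kappa_{m\times n}\to\kappa$.

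For the upper bound on $\limsup\hat\kappa_{m\times n}$: since $\hat N_{m\times n}=N_{m\times n,c_{m\times n}}$ is one of the summands of $N_{m\times n}=\sum_{c\ge 0}N_{m\times n,c}$ and all summands are nonnegative, I would observe $\hat N_{m\times n}\le N_{m\times n}$, hence $\hat\kappa_{m\times n}\le\kappa_{m\times n}$ for every $m,n$. Letting $m,n\to\infty$ gives $\limsup_{m,n\to\infty}\hat\kappa_{m\times n}\le\kappa$.

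For the matching lower bound, I would use that $N_{m\times n,c}=0$ once $c$ exceeds the maximum-independent-set size $C_{m,n}:=\lfloor\tfrac{m+1}{2}\rfloor\lfloor\tfrac{n+1}{2}\rfloor$, which satisfies $C_{m,n}\le\tfrac{(m+1)(n+1)}{4}$. Hence the sum defining $N_{m\times n}$ has at most $C_{m,n}+1$ nonzero terms, each at most $\hat N_{m\times n}$ by the very definition of $\hat N_{m\times n}$, so
\[
    N_{m\times n}\le (C_{m,n}+1)\,\hat N_{m\times n}.
\]
Applying $\tfrac{1}{mn}\ln(\cdot)$ to both sides gives $\kappa_{m\times n}\le\hat\kappa_{m\times n}+\frac{\ln(C_{m,n}+1)}{mn}$, and since $\ln(C_{m,n}+1)=O(\ln(mn))=o(mn)$, the error term vanishes; therefore $\liminf_{m,n\to\infty}\hat\kappa_{m\times n}\ge\lim_{m,n\to\infty}\kappa_{m\times n}=\kappa$. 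Combining the two inequalities yields the claim.

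I do not anticipate any genuine obstacle here: the whole proof rests on the trivial chain $\max_c N_{m\times n,c}\le\sum_c N_{m\times n,c}\le(C_{m,n}+1)\max_c N_{m\times n,c}$ together with the polynomial bound on the number of vertex-count classes. The only step needing a word of justification is the elementary bound $C_{m,n}\le\tfrac{(m+1)(n+1)}{4}$ on the size of a maximum independent set of $K_{m\times n}$ — the same quantity already used in Section~3. (The same sandwich in fact shows $\hat\kappa_{m\times n}$ inherits the expansion $\kappa+k_\kappa\frac{m+n}{mn}$ up to an $O\!\big(\frac{\ln(mn)}{mn}\big)$ correction, but the Proposition asks only for the limit.)
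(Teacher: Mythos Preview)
Your proposal is correct and follows essentially the same approach as the paper: both use the trivial sandwich $\hat N_{m\times n}\le N_{m\times n}\le\big(\lfloor\tfrac{m+1}{2}\rfloor\lfloor\tfrac{n+1}{2}\rfloor+1\big)\hat N_{m\times n}$, take logarithms divided by $mn$, and observe that the logarithmic correction vanishes in the limit. The paper's proof is slightly less formal (it writes the limit directly rather than separating $\limsup$ and $\liminf$), but the argument is the same.
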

    \begin{proof}
        Clearly for any $m,n$, $\hat{N}_{m\times n}\leq N_{m\times n}$, which implies
        \[
            \lim_{m,n\rightarrow\infty}\hat{\kappa}_{m\times n} \coloneq\lim_{m,n\rightarrow\infty}\frac{\ln \hat{N}_{m\times n}}{mn} \leq\kappa \coloneq \lim_{m,n\rightarrow\infty}\frac{\ln N_{m\times n}}{mn}.
        \]
        At this time, there are only $\lfloor\frac{m+1}{2}\rfloor\lfloor\frac{n+1}{2}\rfloor+1$ values of $c$ that let $N_{m\times n,c}$ not be zero,
        and the sum of these $N_{m\times n,c}$ is $N_{m\times n}$.
        So its maximum value under the variance of $c$
        \[
            \hat{N}_{m\times n}\geq \frac{N_{m\times n}}{\lfloor\frac{m+1}{2}\rfloor\lfloor\frac{n+1}{2}\rfloor+1} \geq \frac{4N_{m\times n}}{(m+1)(n+1)}.
        \]
        Thus,
        \begin{align*}
            & \lim_{m,n\rightarrow\infty}\frac{\ln \hat{N}_{m\times n}}{mn}\\
            \geq & \lim_{m,n\rightarrow\infty}\frac{\ln \frac{4N_{m\times n}}{(m+1)(n+1)}}{mn}\\
            = & \lim_{m,n\rightarrow\infty}\frac{\ln N_{m\times n}-\ln (m+1) -\ln (n+1) +\ln 4}{mn}\\
            =& \kappa -\frac{\ln (m+1) +\ln (n+1) -\ln 4}{mn}.
        \end{align*}
        And when $m,n\rightarrow\infty$,$\frac{\ln (m+1) +\ln (n+1) -\ln 4}{mn}\rightarrow0$.
    \end{proof}

    Therefore, we only need to assume that there is the limit
    \[
        k_{\hat{\kappa}} \coloneq (\hat{\kappa}_{m\times n}-\hat{\kappa})(m+n),
    \]
    so that for when $m,n\rightarrow\infty$,
    \[
        \bar{\kappa}_{m\times n} \sim \kappa+\frac{k_{\bar{\kappa}}}{m+n},
    \]
    and furthermore,
    \[
        \ln N_{m\times n, c_{m\times n}} \sim (\bar{\kappa}+\frac{k_{\bar{\kappa}}}{m+n})mn,
    \]

    Similar to $\bar{\rho}$ and $k_{\bar{\rho}}$, our estimation of $k_{\bar{\kappa}}$ and $\kappa$ here is:
    \begin{align*}
        k_{\bar{\kappa}} &\approx 0.085610\pm0.018115; \\
        \kappa           &\approx 0.294601\pm0.001516.
    \end{align*}

    The absolute error of $\kappa$ in this estimation is about $4.0 \times10^{-5}$, which is obviously not as accurate as the estimation given by $\kappa_{m\times n}$.
    This may be because the range of computable $m, n$ in this case is smaller, or because of the model itself.

    \section{Vertex-number weighted enumeration}

    In this chapter, we examine the weighted enumeration of independent sets where the weight corresponds to the vertex number.
    We denote this weighted enumeration for an $m\times n$ king graph as $W(m,n)$.

    It is straightforward to observe that, without altering the positions of the kings, if one of the kings is colored black (while all others remain white), the number of possible colorings always equals the total number of kings.
    Consequently, $W(m,n)$ equals to the enumeration of non-attacking arrangements of two-colored kings on an $m\times n$ chessboard where exactly one king is black.
    The following figure shows an example with 5 vertices (kings):

    \begin{center}
        \tikzpic{0.6}{
            \draw (0,0) grid (3,4);
            \draw (0,3)--(1,4); \draw (0,2)--(2,4); \draw (0,1)--(3,4); \draw (0,0)--(3,3); \draw (1,0)--(3,2); \draw (2,0)--(3,1);
            \draw (3,3)--(2,4); \draw (3,2)--(1,4); \draw (3,1)--(0,4); \draw (3,0)--(0,3); \draw (2,0)--(0,2); \draw (1,0)--(0,1);
            \node at(0,0) {\wking};\filldraw[fill=white] (0,1) circle (0.1);\filldraw[fill=white] (0,2) circle (0.1);\node at(0,3) {\wking};\filldraw[fill=white] (0,4) circle (0.1);
            \filldraw[fill=white] (1,0) circle (0.1);\filldraw[fill=white] (1,1) circle (0.1);\filldraw[fill=white] (1,2) circle (0.1);\filldraw[fill=white] (1,3) circle (0.1);\filldraw[fill=white] (1,4) circle (0.1);
            \filldraw[fill=white] (2,0) circle (0.1);\filldraw[fill=white] (2,1) circle (0.1);\node at(2,2) {\wking};\filldraw[fill=white] (2,3) circle (0.1);\filldraw[fill=white] (2,4) circle (0.1);
            \node at(3,0) {\wking};\filldraw[fill=white] (3,1) circle (0.1);\filldraw[fill=white] (3,2) circle (0.1);\filldraw[fill=white] (3,3) circle (0.1);\node at(3,4) {\bking};
        }
        \tikzpic{0.6}{
            \draw (0,0) grid (3,4);
            \draw (0,3)--(1,4); \draw (0,2)--(2,4); \draw (0,1)--(3,4); \draw (0,0)--(3,3); \draw (1,0)--(3,2); \draw (2,0)--(3,1);
            \draw (3,3)--(2,4); \draw (3,2)--(1,4); \draw (3,1)--(0,4); \draw (3,0)--(0,3); \draw (2,0)--(0,2); \draw (1,0)--(0,1);
            \node at(0,0) {\wking};\filldraw[fill=white] (0,1) circle (0.1);\filldraw[fill=white] (0,2) circle (0.1);\node at(0,3) {\bking};\filldraw[fill=white] (0,4) circle (0.1);
            \filldraw[fill=white] (1,0) circle (0.1);\filldraw[fill=white] (1,1) circle (0.1);\filldraw[fill=white] (1,2) circle (0.1);\filldraw[fill=white] (1,3) circle (0.1);\filldraw[fill=white] (1,4) circle (0.1);
            \filldraw[fill=white] (2,0) circle (0.1);\filldraw[fill=white] (2,1) circle (0.1);\node at(2,2) {\wking};\filldraw[fill=white] (2,3) circle (0.1);\filldraw[fill=white] (2,4) circle (0.1);
            \node at(3,0) {\wking};\filldraw[fill=white] (3,1) circle (0.1);\filldraw[fill=white] (3,2) circle (0.1);\filldraw[fill=white] (3,3) circle (0.1);\node at(3,4) {\wking};
        }
        \tikzpic{0.6}{
            \draw (0,0) grid (3,4);
            \draw (0,3)--(1,4); \draw (0,2)--(2,4); \draw (0,1)--(3,4); \draw (0,0)--(3,3); \draw (1,0)--(3,2); \draw (2,0)--(3,1);
            \draw (3,3)--(2,4); \draw (3,2)--(1,4); \draw (3,1)--(0,4); \draw (3,0)--(0,3); \draw (2,0)--(0,2); \draw (1,0)--(0,1);
            \node at(0,0) {\wking};\filldraw[fill=white] (0,1) circle (0.1);\filldraw[fill=white] (0,2) circle (0.1);\node at(0,3) {\wking};\filldraw[fill=white] (0,4) circle (0.1);
            \filldraw[fill=white] (1,0) circle (0.1);\filldraw[fill=white] (1,1) circle (0.1);\filldraw[fill=white] (1,2) circle (0.1);\filldraw[fill=white] (1,3) circle (0.1);\filldraw[fill=white] (1,4) circle (0.1);
            \filldraw[fill=white] (2,0) circle (0.1);\filldraw[fill=white] (2,1) circle (0.1);\node at(2,2) {\bking};\filldraw[fill=white] (2,3) circle (0.1);\filldraw[fill=white] (2,4) circle (0.1);
            \node at(3,0) {\wking};\filldraw[fill=white] (3,1) circle (0.1);\filldraw[fill=white] (3,2) circle (0.1);\filldraw[fill=white] (3,3) circle (0.1);\node at(3,4) {\wking};
        }
        \tikzpic{0.6}{
            \draw (0,0) grid (3,4);
            \draw (0,3)--(1,4); \draw (0,2)--(2,4); \draw (0,1)--(3,4); \draw (0,0)--(3,3); \draw (1,0)--(3,2); \draw (2,0)--(3,1);
            \draw (3,3)--(2,4); \draw (3,2)--(1,4); \draw (3,1)--(0,4); \draw (3,0)--(0,3); \draw (2,0)--(0,2); \draw (1,0)--(0,1);
            \node at(0,0) {\bking};\filldraw[fill=white] (0,1) circle (0.1);\filldraw[fill=white] (0,2) circle (0.1);\node at(0,3) {\wking};\filldraw[fill=white] (0,4) circle (0.1);
            \filldraw[fill=white] (1,0) circle (0.1);\filldraw[fill=white] (1,1) circle (0.1);\filldraw[fill=white] (1,2) circle (0.1);\filldraw[fill=white] (1,3) circle (0.1);\filldraw[fill=white] (1,4) circle (0.1);
            \filldraw[fill=white] (2,0) circle (0.1);\filldraw[fill=white] (2,1) circle (0.1);\node at(2,2) {\wking};\filldraw[fill=white] (2,3) circle (0.1);\filldraw[fill=white] (2,4) circle (0.1);
            \node at(3,0) {\wking};\filldraw[fill=white] (3,1) circle (0.1);\filldraw[fill=white] (3,2) circle (0.1);\filldraw[fill=white] (3,3) circle (0.1);\node at(3,4) {\wking};
        }
        \tikzpic{0.6}{
            \draw (0,0) grid (3,4);
            \draw (0,3)--(1,4); \draw (0,2)--(2,4); \draw (0,1)--(3,4); \draw (0,0)--(3,3); \draw (1,0)--(3,2); \draw (2,0)--(3,1);
            \draw (3,3)--(2,4); \draw (3,2)--(1,4); \draw (3,1)--(0,4); \draw (3,0)--(0,3); \draw (2,0)--(0,2); \draw (1,0)--(0,1);
            \node at(0,0) {\wking};\filldraw[fill=white] (0,1) circle (0.1);\filldraw[fill=white] (0,2) circle (0.1);\node at(0,3) {\wking};\filldraw[fill=white] (0,4) circle (0.1);
            \filldraw[fill=white] (1,0) circle (0.1);\filldraw[fill=white] (1,1) circle (0.1);\filldraw[fill=white] (1,2) circle (0.1);\filldraw[fill=white] (1,3) circle (0.1);\filldraw[fill=white] (1,4) circle (0.1);
            \filldraw[fill=white] (2,0) circle (0.1);\filldraw[fill=white] (2,1) circle (0.1);\node at(2,2) {\wking};\filldraw[fill=white] (2,3) circle (0.1);\filldraw[fill=white] (2,4) circle (0.1);
            \node at(3,0) {\bking};\filldraw[fill=white] (3,1) circle (0.1);\filldraw[fill=white] (3,2) circle (0.1);\filldraw[fill=white] (3,3) circle (0.1);\node at(3,4) {\wking};
        }
    \end{center}

    Therefore, the corresponding hard-core model is equivalent to adding a specific particle of different type on the basis of the original model.

    \subsection{Algorithms}

    To realize the coloring of a single king, we introduce $\sharp'$ and $1'$ as new horizontal symbols for Wang tiles, where the $'$ notation indicates that no king has been colored yet.
    The top-left Wang tile set on the chessboard adds $'$ to the horizontal characters, and it is removed when a chosen king (located at the bottom-right corner of a $\Oiv$) is colored.
    The Wang tile set at the top-left corner is:
    \[
        \varU_{00}=\wtile{}{}{}{\sharp'}+\wtile{}{}{}{1'},
    \]
    which adds $'$ to the horizontal symbols.
    And the Wang tile set at non-boundary positions is
    \[
        \varU =\wtile{}{}{}{}+\wtile{}{}{}{1}+\wtile{1}{}{1}{}+\wtile{1}{1}{}{}+\wtile{\sharp'}{}{}{\sharp'}+\wtile{\sharp'}{}{}{1'}+\wtile{1'}{}{1}{\sharp'}+\wtilek{1'}{1}{}{\sharp'}+\wtilebk{1'}{1}{}{\sharp},
    \]
    where the last tile $\wtilebk{1'}{1}{}{\sharp}$ means to color the king black in this position (which removed the $'$ notation), others just keep the state of the $'$ notation.

    Naturally, we can further reduce the size of the state tensor by applying the tile merging method described in Section 2.

    \subsection{Results}

    We have computed all cases where $m\leq 36,m + n \leq 75$.
    The table below presents the results for all cases with $m = 36, n\leq 39$:
    \tiny
    \begin{align*}
        n~~ & W(m,n)\\
        1~~ & 394905492\\
        2~~ & 1119872954208\\
        3~~ & 437795895219640704\\
        4~~ & 10488449457877727581896\\
        5~~ & 950690466861189052025808624\\
        6~~ & 40405648811419545702973459599252\\
        7~~ & 2446512847037847019074434950183648624\\
        8~~ & 119561815924055428415685514629635629063188\\
        9~~ & 6391107867299675456409258933225136908100573930\\
        10~~ & 320284968162368720355981263263997702188252565628932\\
        \ldots & \ldots\\
        \shortstack[l]{34\\\phantom{0}}~~ &
        \shortstack[l]{9755945165263136061019925608137791069134878080484545173499730156288649758612128302014855530428622806\\347366699073793437075743503809158105083659604887625716988891640}\\
        \shortstack[l]{35\\\phantom{0}}~~ &
        \shortstack[l]{4645553237750484588511649712884027312983436639444355453638385105641225111161128480986374641610149728\\79028391294721887610178415740262275039100006675874386217577314324236}\\
        \shortstack[l]{36\\\phantom{0}}~~ &
        \shortstack[l]{2210369641008972305016602927856884263689257319123943385072578006107086319342102051173050329060548821\\2045894603366383583546448335730654797700700199993957434658988856246436972}\\
        \shortstack[l]{37\\\phantom{0}}~~ &
        \shortstack[l]{1050920971705192128063573086017556059846116070548973949755107056821720782044044857192332312184925649\\238976584296182295004130314552293912705935328808089477794966752066019708763784}\\
        \shortstack[l]{38\\\phantom{0}}~~ &
        \shortstack[l]{4993094290959424444460539864800681414073576256421503669234504465425727527544781086334573559217562692\\7082807330070000930997886370824955799552494745338531907215632642200851690325561604}\\
        \shortstack[l]{39\\\phantom{0}}~~ &
        \shortstack[l]{2370716457457072140994798429008209568954369194026641755801790678933329335895788148466493103368090320\\339310982439989361269197827213635367070096218501842858565366526321085748516741054137890}
    \end{align*}
    \normalsize

    It can be observed that variation trend of $W_{m\times n}$ is very close to that of $N_{m\times n}$.
    Thus, we can similarly define
    \[
        \lambda_{m\times n} \coloneq \frac{\ln W_{m\times n}}{mn}.
    \]

    The chart below present some values of $\lambda_{m\times n}$ of cases $m=n,2n,3n$ derived from our result:
    \begin{center}
        \tikzpic{1}{
            \draw[->](0,0)--(5,0);\node at(5,0)[right]{$n$};
            \draw[step=1.25] (0,-0.1)grid(4.5,0);\node at(0,-0.1)[below]{0};\node at(1.25,-0.1)[below]{10};\node at(2.5,-0.1)[below]{20};\node at(3.75,-0.1)[below]{30};
            \draw[->](0,0)--(0,5.5);\node at(0,5.5) [above] {$\lambda_{m\times n}$};
            \draw (-0.1,0)grid(0,5.5);\node at(-0.1,0)[left]{0.28};\node at(-0.1,1)[left]{0.33};\node at(-0.1,2)[left]{0.38};\node at(-0.1,3)[left]{0.43};\node at(-0.1,4)[left]{0.48};\node at(-0.1,5)[left]{0.53};
            \draw (0.250,1.331)--(0.375,3.809)--(0.500,2.914)--(0.625,2.619)--(0.750,2.191)--(0.875,1.933)--(1.000,1.700)--(1.125,1.530)--(1.250,1.386)--(1.375,1.272)--(1.500,1.175)--(1.625,1.095)--(1.750,1.026)--(1.875,0.967)--(2.000,0.917)--(2.125,0.872)--(2.250,0.833)--(2.375,0.798)--(2.500,0.768)--(2.625,0.740)--(2.750,0.715)--(2.875,0.693)--(3.000,0.672)--(3.125,0.654)--(3.250,0.637)--(3.375,0.621)--(3.500,0.607)--(3.625,0.594)--(3.750,0.581)--(3.875,0.570)--(4.000,0.559)--(4.125,0.549)--(4.250,0.540)--(4.375,0.532)--(4.500,0.524);
            \draw[densely dashed] (0.125,1.331)--(0.250,3.064)--(0.375,3.141)--(0.500,2.337)--(0.625,1.978)--(0.750,1.646)--(0.875,1.438)--(1.000,1.268)--(1.125,1.143)--(1.250,1.041)--(1.375,0.960)--(1.500,0.893)--(1.625,0.837)--(1.750,0.790)--(1.875,0.750)--(2.000,0.715)--(2.125,0.685)--(2.250,0.659)--(2.375,0.635)--(2.500,0.615)--(2.625,0.596)--(2.750,0.579)--(2.875,0.564)--(3.000,0.551)--(3.125,0.538);
            \draw[densely dotted] (0.125,5.129)--(0.250,3.127)--(0.375,2.809)--(0.500,2.027)--(0.625,1.705)--(0.750,1.415)--(0.875,1.240)--(1.000,1.096)--(1.125,0.992)--(1.250,0.908)--(1.375,0.841)--(1.500,0.786)--(1.625,0.741)--(1.750,0.702)--(1.875,0.670)--(2.000,0.641)--(2.125,0.617)--(2.250,0.595);

            \draw (2,3.5)--(3,3.5);\node at(3,3.5) [right] {$m=n$};
            \draw[densely dashed] (2,3.0)--(3,3.0);\node at(3,3.0) [right] {$m=2n$};
            \draw[densely dotted] (2,2.5)--(3,2.5);\node at(3,2.5) [right] {$m=3n$};
        }
        \tikzpic{1}{
            \draw[->](0,0)--(6,0);\node at(6,0)[right]{$\frac{m+n}{mn}$};
            \draw (0,-0.1)grid(5.5,0);\draw (0.5,-0.1)--(0.5,0);\node at(0,-0.1)[below]{$\frac{1}{\infty}$};\node at(0.5,-0.1)[below]{$\frac{1}{8}$};\node at(1,-0.1)[below]{$\frac{1}{4}$};\node at(2,-0.1)[below]{$\frac{1}{2}$};\node at(4,-0.1)[below]{1};
            \draw[->](0,0)--(0,5.5);\node at(0,5.5) [above] {$\lambda_{m\times n}$};
            \draw (-0.1,0)grid(0,5.5);\node at(-0.1,0)[left]{0.28};\node at(-0.1,1)[left]{0.33};\node at(-0.1,2)[left]{0.38};\node at(-0.1,3)[left]{0.43};\node at(-0.1,4)[left]{0.48};\node at(-0.1,5)[left]{0.53};
            \draw (4.000,1.331)--(2.666,3.809)--(2.000,2.914)--(1.600,2.619)--(1.333,2.191)--(1.142,1.933)--(1.000,1.700)--(0.888,1.530)--(0.800,1.386)--(0.727,1.272)--(0.666,1.175)--(0.615,1.095)--(0.571,1.026)--(0.533,0.967)--(0.500,0.917)--(0.470,0.872)--(0.444,0.833)--(0.421,0.798)--(0.400,0.768)--(0.381,0.740)--(0.363,0.715)--(0.347,0.693)--(0.333,0.672)--(0.320,0.654)--(0.307,0.637)--(0.296,0.621)--(0.285,0.607)--(0.275,0.594)--(0.266,0.581)--(0.258,0.570)--(0.250,0.559)--(0.242,0.549)--(0.235,0.540)--(0.228,0.532)--(0.222,0.524);
            \draw[densely dashed] (6.000,1.331)--(3.000,3.064)--(2.000,3.141)--(1.500,2.337)--(1.200,1.978)--(1.000,1.646)--(0.857,1.438)--(0.750,1.268)--(0.666,1.143)--(0.600,1.041)--(0.545,0.960)--(0.500,0.893)--(0.461,0.837)--(0.428,0.790)--(0.400,0.750)--(0.375,0.715)--(0.352,0.685)--(0.333,0.659)--(0.315,0.635)--(0.300,0.615)--(0.285,0.596)--(0.272,0.579)--(0.260,0.564)--(0.250,0.551)--(0.240,0.538);
            \draw[densely dotted] (5.333,5.129)--(2.666,3.127)--(1.777,2.809)--(1.333,2.027)--(1.066,1.705)--(0.888,1.415)--(0.761,1.240)--(0.666,1.096)--(0.592,0.992)--(0.533,0.908)--(0.484,0.841)--(0.444,0.786)--(0.410,0.741)--(0.381,0.702)--(0.355,0.670)--(0.333,0.641)--(0.313,0.617)--(0.296,0.595);
        }
    \end{center}

    \subsection{Estimation of constants}

    The behavior of $\lambda_{m\times n}$ is also similar to that of $\kappa_{m\times n}$,
    and
    \begin{proposition}
        \[
            \lim_{m,n\rightarrow0} \lambda_{m\times n}=\kappa.
        \].
    \end{proposition}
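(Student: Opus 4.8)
The plan is to sandwich $W_{m\times n}$ between two explicit multiples of $N_{m\times n}$ and then conclude by a squeeze argument, in direct analogy with the proof of the previous proposition (the limit is of course understood as $m,n\rightarrow\infty$). First I would record the identity
\[
W_{m\times n}=\sum_{c\geq 0} c\, N_{m\times n,c},
\]
which is immediate from the coloring interpretation described above: a placement of $c$ non-attacking kings contributes exactly $c$ coloured configurations, one for each choice of the black king.

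Next I would establish the two-sided estimate
\[
N_{m\times n}-1\;\leq\; W_{m\times n}\;\leq\; \frac{(m+1)(n+1)}{4}\,N_{m\times n}.
\]
The lower bound holds because every term with $c\geq 1$ has coefficient at least $1$, so $W_{m\times n}\geq\sum_{c\geq 1}N_{m\times n,c}=N_{m\times n}-1$, the $-1$ accounting for the empty independent set. The upper bound follows from the fact recalled in Section~3 that the vertex number $c$ never exceeds $\lfloor\frac{m+1}{2}\rfloor\lfloor\frac{n+1}{2}\rfloor\leq\frac{(m+1)(n+1)}{4}$, hence $W_{m\times n}\leq\frac{(m+1)(n+1)}{4}\sum_c N_{m\times n,c}=\frac{(m+1)(n+1)}{4}N_{m\times n}$.

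Finally I would take logarithms, divide by $mn$, and pass to the limit. From the upper bound,
\[
\lambda_{m\times n}\;\leq\;\kappa_{m\times n}+\frac{\ln\frac{(m+1)(n+1)}{4}}{mn},
\]
whose correction term is $O\!\left(\frac{\ln(mn)}{mn}\right)\to 0$, so $\limsup_{m,n\rightarrow\infty}\lambda_{m\times n}\leq\kappa$. From the lower bound,
\[
\lambda_{m\times n}\;\geq\;\frac{\ln(N_{m\times n}-1)}{mn}=\kappa_{m\times n}+\frac{\ln\!\left(1-\tfrac{1}{N_{m\times n}}\right)}{mn},
\]
and since $N_{m\times n}\rightarrow\infty$ the extra term tends to $0$, giving $\liminf_{m,n\rightarrow\infty}\lambda_{m\times n}\geq\kappa$. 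Together these force $\lim_{m,n\rightarrow\infty}\lambda_{m\times n}=\kappa$.

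I do not expect a genuine obstacle here; the argument is elementary. The only points that need a little care are quoting the uniform bound $c\leq\frac{(m+1)(n+1)}{4}$ correctly from Section~3, so that the logarithmic correction is genuinely $o(mn)$ in the exponent, and observing that the small degenerate cases (where $W_{m\times n}=0$ or $N_{m\times n}=1$) do not affect the limit.
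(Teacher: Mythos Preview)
Your proposal is correct and follows essentially the same route as the paper: both arguments sandwich $W_{m\times n}$ between $N_{m\times n}$ (up to an additive or multiplicative constant) and $\tfrac{(m+1)(n+1)}{4}N_{m\times n}$, then squeeze after dividing $\ln$ by $mn$. The only cosmetic difference is that the paper observes, for $m\geq 3$ or $n\geq 3$, the existence of a two-vertex independent set to upgrade your $W_{m\times n}\geq N_{m\times n}-1$ to $W_{m\times n}\geq N_{m\times n}$, whereas you keep the $-1$ and absorb it in the limit; both are fine.
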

    \begin{proof}
        This is because, when taking logarithms, their weights (number of tiles) become negligible compared to the tiling enumeration.

        Specifically, noted that the number of vertices is always not greater than $\frac{m+1}{2}\cdot\frac{n+1}{2}$,
        \begin{align*}
             \lambda_{m\times n} &= \frac{\ln W_{m\times n}}{mn}\\
                                &\leq \frac{\ln(\frac{m+1}{2}\cdot\frac{n+1}{2}\cdot N_{m\times n})}{mn}\\
                                &= \frac{\ln\frac{m+1}{2}+\ln\frac{n+1}{2} +\ln N_{m\times n})}{mn}\\
                                &=\kappa_{m\times n}+\frac{\ln(m+1)+\ln(n+1)-2\ln 2}{mn},
        \end{align*}
        and when $m,n\rightarrow\infty$,$\frac{\ln(m+1)+\ln(n+1)-2\ln 2}{mn}\rightarrow0$.
        Furthermore, excluding the empty vertex set, the number is always at least 1, so that when $m\geq 3$ or $n \geq 3$,
        there exists the independent set with at least two vertices, and
        \[
            W_{m\times n}\geq N_{m\times n}, \lambda_{m\times n}\geq\kappa_{m\times n}.
        \]
    \end{proof}
    Thus, to estimate $\lambda_{m\times n}$, it suffices to define
    \[
        k_\lambda \coloneq \lim_{m,n\rightarrow\infty} (\lambda_{m\times n}-\kappa)(mn),
    \]
    yielding when $m,n\rightarrow\infty$,
    \begin{align*}
        \lambda_{m\times n} & \sim \kappa+\frac{k_\lambda}{m+n};\\
        \ln W_{m \times n}  & \sim (\kappa+\frac{k_\lambda}{m+n})mn.
    \end{align*}

    Compared with the case without weighting, the regularity of the data at this time is poor, and the speed of numerical convergence is significantly slower.
    The estimation of $k_\lambda$ we get from the data of $m\leq 36,n\leq39$ is:

    \[
        0.1~4615 < k_\lambda \lessapprox 0.1~5145. \\
    \]
    It can be seen that the accuracy is significantly lower.
    And at this time, $\kappa$ can only give a lower bound close to it because of its poor regularity:
    \[
        0.2946407112 \lessapprox \kappa
    \]

    The absolute error of $\kappa$ in this estimation is about $5.7 \times10^{-7}$, which is obviously not as accurate as the estimation given by $\kappa_{m\times n}$.

    \bibliographystyle{unsrt}
    \bibliography{main}

\end{document}